\newcommand{\la}{\langle}
\newcommand{\ra}{\rangle}
\renewcommand{\Re}{\operatorname{Re}}
\renewcommand{\Im}{\operatorname{Im}}
\newcommand{\sech}{\operatorname{sech}}
\newcommand{\supp}{\operatorname{supp}}
\newtheorem{theorem}{Theorem}
\newtheorem{proposition}[theorem]{Proposition}
\newtheorem{lemma}[theorem]{Lemma}
\theoremstyle{remark}
\newtheorem{remark}[theorem]{Remark}
\numberwithin{equation}{section}
\numberwithin{theorem}{section}
\numberwithin{table}{section}
\numberwithin{figure}{section}
\title[Traveling solitons for fractional NLS]{A new class of Traveling Solitons for cubic Fractional Nonlinear Schr\"odinger equations}
\date{\today}
\author{Younghun Hong}
\address{University of Texas, Austin}
\author{Yannick Sire}
\address{Universit\'e Aix-Marseille, I2M, UMR 7353, Marseille, France}
\begin{document}

\maketitle

\begin{abstract}
We consider the one-dimensional cubic fractional nonlinear Schr\"odinger equation 
$$i\partial_tu-(-\Delta)^\sigma u+|u|^{2}u=0,$$
where $\sigma \in (\frac12,1)$ and the operator $(-\Delta)^\sigma$ is the fractional Laplacian of symbol $|\xi|^{2\sigma}$. Despite the lack of any Galilean-type invariance, we construct a new class of traveling soliton solutions of the form
$$u(t,x)=e^{-it(|k|^{2\sigma}-\omega^{2\sigma})}Q_{\omega,k}(x-2t\sigma|k|^{2\sigma-2}k),\quad k\in\mathbb{R},\ \omega>0$$
by a rather involved variational argument. 
\end{abstract}

\tableofcontents

\section{Introduction}
This paper is devoted to the investigation of the one-dimensional cubic focusing fractional nonlinear Schr\"odinger equations (NLS)
\begin{equation}\tag{$\textup{NLS}_\sigma$}
i\partial_tu-(-\Delta)^\sigma u+|u|^{2}u=0,
\end{equation}
where $\sigma\in(\frac{1}{2},1)$. The fractional Schr\"odinger equations have been introduced by Laskin \cite{laskin} in the context of quantum mechanics, generalizing the Feynman integral using Levy processes (see \cite{B} for an account on Levy processes). Moreover, the fractional NLS has been investigated in \cite{ozawa,HS,COX,zihua,CHKL,FL,FLS2}  (see also references therein). 

The goal of the present work is to construct a family of traveling solitons for the fractional NLS.

In the Laplacian case ($\sigma=1$), traveling solitons can be formed easily by boosting static solitons by the Galilean transformation. Precisely, a static soliton for the cubic NLS is given by $e^{it\omega^2} Q_\omega(x)$ with $\omega>0$, where $Q_\omega(x)=\sqrt{2}\omega\sech (\omega x)$ is the ground state for the nonlinear elliptic equation
$$-\Delta u+\omega^2 u-|u|^2 u=0.$$
Then, since the equation is invariant under the Galilean transformation
$$u(t,x)\mapsto e^{-it|k|^2}e^{ik\cdot x}u(t,x-2tk),\quad k\in\mathbb{R},$$
boosting a static solution, we obtain a traveling soliton
$$e^{-it(|k|^{2}-\omega^{2})} e^{ik\cdot x}Q_\omega(x-2tk)$$
with a velocity of $2k$.

In the fractional case, the ground state $Q_\omega$ for the elliptic equation 
\begin{equation}\label{fractional elliptic equation}
(-\Delta)^\sigma u+\omega^2 u-|u|^2 u=0
\end{equation}
is known to exist when $\frac{1}{4}<\sigma<1$ \cite[Proposition 1.1]{FL}, and thus $e^{it\omega^{2\sigma}}Q_\omega(x)$ is a static soliton solution to the fractional NLS. However, contrary to the Laplacian case, one cannot construct a traveling soliton simply by boosting a static soliton, since the equation does not have any exact Galilean-type invariance due to the non-locality of the fractional Laplacian $(-\Delta)^\sigma$.

Nevertheless, as observed in \cite{HS}, the fractional NLS is {\sl almost invariant} under the pseudo-Galilean transformation 
\begin{equation}\label{pseudoGal}
\mathcal G_k: u(t,x)\mapsto e^{-it|k|^{2\sigma}}e^{ik\cdot x}u(t,x-2t\sigma|k|^{2(\sigma-1)}k)
\end{equation}
for smooth solutions. 

By almost invariance we mean: if $u(t)$ solves $(\textup{NLS}_\sigma)$, then $\tilde{u}(t)=\mathcal{G}_ku(t)$ obeys $(\textup{NLS}_\sigma)$ with an error term
\begin{equation}\label{fNLS with error}
i\partial_t\tilde{u}+(-\Delta)^\sigma\tilde{u}+\omega|\tilde{u}|^{p-1}\tilde{u}=e^{it|v|^{2\sigma}}e^{-ik\cdot x}(\mathcal{E}u)(t,x-2\sigma t|k|^{2(\sigma-1)}k),
\end{equation}
where
$$\widehat{\mathcal{E}u}(\xi)=E(\xi)\hat{u}(\xi)$$
with 
$$E(\xi)=|\xi-v|^{2\sigma}-|\xi|^{2\sigma}-|v|^{2\sigma}+2\sigma|v|^{2(\sigma-1)}v\cdot\xi.$$

Therefore, it is still natural to consider an ansatz of the form
\begin{equation}\label{ansatz}
u_{\omega,k}(t,x)=e^{-it(|k|^{2\sigma}-\omega^{2\sigma})} e^{ik\cdot x}Q_{\omega,k}(x-2t\sigma|k|^{2\sigma-2}k),
\end{equation}
which will lead to a natural family of moving solitons with frequency $\omega$ and speed $k$. The profile $Q_{\omega,k}$ then solves the pseudo-differential equation 
\begin{equation}\label{Q equation}
\mathcal{P}_k Q_{\omega,k}+\omega^{2\sigma}Q_{\omega,k}-|Q_{\omega,k}|^{2}Q_{\omega,k}=0,
\end{equation}
where
$$\mathcal{P}_k=e^{-ik\,x}(-\Delta)^\sigma e^{ik\,x}-|k|^{2\sigma}+2i\sigma|k|^{2\sigma-2}k\cdot\nabla_x.$$

In this paper, we establish existence of a solution $Q_{\omega,k}$ to the equation \eqref{Q equation} and the traveling soliton $e^{it\omega^{2\sigma}}Q_{\omega,k}(x)$ for the fractional NLS.
\begin{theorem}\label{existence} Let $\sigma\in(\frac{1}{2},1)$. For any $k \in \mathbb R$, there exists $Q_{\omega,k} \in H^1(\mathbb R)$ solving \eqref{Q equation} for some $\omega>0$. Furthermore, we have $Q_{\omega,k} \in C^\infty(\mathbb R)$.
\end{theorem}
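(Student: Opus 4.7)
\emph{Variational setup.} The strategy is to obtain $Q=Q_{\omega,k}$ as a minimizer of a constrained variational problem whose Euler-Lagrange equation is exactly \eqref{Q equation}, with $\omega^{2\sigma}>0$ supplied by the Lagrange multiplier. The Fourier symbol of $\mathcal P_k$ is
$$P_k(\xi)=|\xi+k|^{2\sigma}-|k|^{2\sigma}-2\sigma|k|^{2\sigma-2}k\,\xi,$$
which is nonnegative by convexity of $y\mapsto|y|^{2\sigma}$ (using $2\sigma>1$), grows like $|\xi|^{2\sigma}$ at high frequencies, and vanishes quadratically at $\xi=0$; in particular $\la\mathcal P_ku,u\ra+\|u\|_{L^2}^2\simeq\|u\|_{H^\sigma}^2$. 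For $\lambda>0$ I would then consider
$$I(\lambda):=\inf\bigl\{E_k(u):u\in H^\sigma(\mathbb R),\ \|u\|_{L^2}^2=\lambda\bigr\},\qquad E_k(u)=\tfrac12\la\mathcal P_ku,u\ra-\tfrac14\|u\|_{L^4}^4.$$

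\emph{Coercivity and negativity of $I(\lambda)$.} Two elementary facts make the problem well-posed. First, the one-dimensional Gagliardo-Nirenberg inequality $\|u\|_{L^4}^4\lesssim\|u\|_{L^2}^{4-1/(2\sigma)}\|u\|_{\dot H^\sigma}^{1/(2\sigma)}$ with exponent $1/(2\sigma)<1$ (precisely where the hypothesis $\sigma>\tfrac12$ enters) combined with Young's inequality shows that $E_k$ is coercive in $H^\sigma$ and bounded below on the mass constraint, so $I(\lambda)>-\infty$. Second, testing against a mass-preserving dilation $u_\delta(x)=\delta^{1/2}u(\delta x)$ and Taylor-expanding $P_k$ near $\xi=0$ gives $\la\mathcal P_ku_\delta,u_\delta\ra=O(\delta^2)$ while $\|u_\delta\|_{L^4}^4=\delta\|u\|_{L^4}^4$, whence $E_k(u_\delta)<0$ for small $\delta$. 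This both shows $I(\lambda)<0$ for every $\lambda>0$ and, in the mass-subcritical spirit, yields the scaling inequality $I(\alpha\lambda)<\alpha I(\lambda)$ for $\alpha>1$, from which the strict subadditivity $I(\lambda)<I(\lambda_1)+I(\lambda-\lambda_1)$ for $0<\lambda_1<\lambda$ follows by monotonicity of $\lambda\mapsto I(\lambda)/\lambda$.

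\emph{Compactness, Euler-Lagrange, and smoothness.} A minimizing sequence $(u_n)$ is bounded in $H^\sigma$, and the only defect of compactness is translation invariance. Applying Lions' concentration-compactness lemma, one finds $(x_n)$ such that, up to a subsequence, $u_n(\cdot-x_n)\rightharpoonup Q\not\equiv 0$ weakly in $H^\sigma$; vanishing is excluded because $I(\lambda)<0$ forces $\liminf\|u_n\|_{L^4}^4>0$. Writing $u_n(\cdot-x_n)=Q+r_n$ and applying the Brezis-Lieb splitting for $\|\cdot\|_{L^4}^4$ together with the analogous identity for the nonnegative quadratic form $\la\mathcal P_k\cdot,\cdot\ra$, strict subadditivity rules out dichotomy and forces $r_n\to0$ in $H^\sigma$; hence $Q$ attains $I(\lambda)$. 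The Euler-Lagrange identity reads $\mathcal P_kQ+\omega^{2\sigma}Q=|Q|^2Q$ with $\omega^{2\sigma}$ built from the Lagrange multiplier, and pairing it with $\bar Q$ while using $E_k(Q)=I(\lambda)<0$ yields $\omega^{2\sigma}\|Q\|_{L^2}^2=\|Q\|_{L^4}^4-\la\mathcal P_kQ,Q\ra>\tfrac12\|Q\|_{L^4}^4>0$, so $\omega>0$. For smoothness, the embedding $H^\sigma(\mathbb R)\hookrightarrow L^\infty(\mathbb R)$ (available since $\sigma>\tfrac12$) gives $|Q|^2Q\in L^\infty$, and since $P_k(\xi)+\omega^{2\sigma}$ is elliptic of order $2\sigma$, a standard pseudodifferential bootstrap lifts $Q$ into $H^s$ for every $s>0$, so $Q\in C^\infty(\mathbb R)$. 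The main obstacle I expect is the strict-subadditivity/anti-dichotomy step, since the asymmetry of $P_k$ in $\xi$ for $k\neq 0$ precludes the use of symmetric rearrangement and forces the compactness argument to rely purely on translated profiles.
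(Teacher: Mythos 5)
Your proposal is correct but takes a genuinely different route from the paper. You set up a constrained minimization on a mass sphere (Cazenave--Lions style): minimize $E_k(u)=\tfrac12\la\mathcal P_ku,u\ra-\tfrac14\|u\|_{L^4}^4$ subject to $\|u\|_{L^2}^2=\lambda$, prove $I(\lambda)\in(-\infty,0)$ and strict subadditivity, and then run Lions' concentration--compactness. The paper instead minimizes an \emph{unconstrained} Weinstein-type quotient $\mathcal W=W_1+W_2-W_3$ built from the Gagliardo--Nirenberg inequality of Theorem~\ref{GN for P_1}. Because $\mathcal W$ is not scaling-invariant (the symbol $p_k$ is inhomogeneous), the paper's minimizing sequences come with an undetermined scale $\lambda_n$ that may degenerate, and the bulk of Section~4 is devoted to ruling out $\lambda_n\to0$ (concentration, compared against the $(-\Delta)^\sigma$ ground state $Q_{(\sigma)}$) and $\lambda_n\to\infty$ (spreading, compared against the $-\Delta$ ground state $Q_{(1)}$), using a carefully tuned interpolation term $W_3$ to make the comparisons strict. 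Your constrained approach sidesteps all of that: fixing the mass makes coercivity give $H^\sigma$ bounds directly, so there is no scale parameter to control, and strict subadditivity, obtained purely by multiplying by constants $\sqrt{\alpha}$ rather than by dilating, cleanly kills dichotomy. This is essentially the Albert--Bona--Saut route, which the authors themselves acknowledge in Remark~1.6 covers their existence result; what the Weinstein route buys is an explicit sharp Gagliardo--Nirenberg-type constant for $\mathcal P_1$, which your method does not produce.

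Two small corrections. First, the one-dimensional Gagliardo--Nirenberg exponents are $\|u\|_{L^4}^4\lesssim\|u\|_{L^2}^{(4\sigma-1)/\sigma}\|u\|_{\dot H^\sigma}^{1/\sigma}$, not $\|u\|_{\dot H^\sigma}^{1/(2\sigma)}$ as you wrote; the conclusion you need, namely that the $\dot H^\sigma$-exponent is strictly less than $2$, still holds for $\sigma>\tfrac12$ since $1/\sigma<2$, so the coercivity argument is unaffected. Second, the role of $\sigma>\tfrac12$ is not primarily the subcriticality exponent: it is the ellipticity of $\mathcal P_k$, i.e.\ $p_k''(\xi)=2\sigma(2\sigma-1)|\xi+k|^{2\sigma-2}>0$, which you correctly invoke when asserting $p_k\geq0$ and $\la\mathcal P_ku,u\ra+\|u\|_{L^2}^2\simeq\|u\|_{H^\sigma}^2$; without it the whole variational setup collapses. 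Finally, you should verify in the anti-dichotomy step that $\la\mathcal P_k\cdot,\cdot\ra$ splits over far-separated translated profiles; since $\mathcal P_k$ is a nonnegative translation-invariant Fourier multiplier bounded by $(-\Delta)^\sigma$ this is routine, but it is the one place where the nonlocality of $\mathcal P_k$ genuinely enters and deserves a sentence.
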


\begin{remark}
In the book \cite{sulem}, an exact solution of the standard NLS equation is also found. However, here in our context, we consider more general equations, i.e. nonlocal ones, and our solutions are also exact but differ substantially from the ones of Sulem and Sulem. 
\end{remark}

The key observation to prove the theorem is that the pseudo-differential operator $\mathcal{P}_k$ is an elliptic operator. Indeed, $\mathcal{P}_k$ is a Fourier multiplier, $\widehat{\mathcal{P}_k f}(\xi)=p_k(\xi)\hat{f}(\xi)$, with the symbol
$$p_k(\xi)=|\xi+k|^{2\sigma}-|k|^{2\sigma}-2\sigma|k|^{2\sigma-2}k\cdot\xi.$$
So, we have $p_k(0)=0$. Differentiating $p_k(\xi)$, we get or $k\neq 0$
\begin{equation}\label{p_k}
\begin{aligned}
p_k'(\xi)&=2\sigma |\xi+k|^{2\sigma-2}(\xi+k)-2\sigma|k|^{2\sigma-2}k&&\Rightarrow &&p_k'(0)=0,\\
p_k''(\xi)&=2\sigma(2\sigma-1)|\xi+k|^{2\sigma-2} &&\Rightarrow && p_k''(\xi)\geq0.
\end{aligned}
\end{equation}
Thus, $p_k(\xi)$ is non-negative for all $\xi$ for $\sigma >1/2$. 

This fact allows us to apply the variational approach as in \cite{weinstein} for instance. It is however more involved, since the symbol of the operator $\mathcal P_k$ can be described differently according to low and high frequencies. Indeed, by \eqref{p_k}, we find the asymptotics of the symbol $p_k(\xi)$:
\begin{equation}\label{p_k asymptotics}
\begin{aligned}
p_k(\xi)&= |\xi|^{2\sigma}+O(|\xi|^{2\sigma-1}) &&\textup{as}&&\xi\to\infty,\\
p_k(\xi)&= \sigma(2\sigma-1)|k|^{2\sigma-2}|\xi|^2+O(|\xi|^3) &&\textup{as}&&\xi\to0.
\end{aligned}
\end{equation}
Therefore, the operator $\mathcal{P}_k$ behaves like $(-\Delta)^\sigma$ in high frequencies, and it behaves like $\sigma(2\sigma-1)|k|^{2\sigma-2}(-\Delta)$ in low frequencies.

Taking the inhomogeneity of the operator $\mathcal{P}_k$ into account, we will introduce a new Weinstein functional $\mathcal{W}$ (see \eqref{mathcal W}) associated with the Gagliardo-Nirenberg type inequality (Theorem \ref{GN for P_1}), and then we will achieve the existence of a profile $Q_{\omega,k}$ as a minimizer for the functional $\mathcal{W}$. 

\begin{remark}
Our proof does not allow to treat the case $\sigma \in (0,\frac12]$, since the operator $\mathcal P_k$ becomes degenerate. Indeed, the symbol of $\mathcal P_k$ is not positive definite. In particular, the second derivative of the symbol vanishes when $\sigma=\frac12$ (see \eqref{p_k asymptotics}). We are not aware even of numerical computations dealing with $\sigma \in (0,\frac12]$. 
\end{remark}

\begin{remark}
When $\sigma=\frac{1}{2}$, in \cite{krieger}, the authors constructed moving solitons using the ansatz 
\begin{equation}\label{ansatz2}
u(t,x)=e^{it}Q_v(x-vt),\quad |v|<1.
\end{equation}
It is easy to check that if $u(t)$ solves the fractional NLS, then the profile $Q_v$ solves a time-independent equation that is elliptic if $|v|<1$, while it is not elliptic if $|v|\geq 1$. As mentioned above, when $\sigma=\frac{1}{2}$, one cannot construct a traveling soliton using the ansatz \eqref{ansatz}. However, when $\sigma\in (\frac{1}{2},1)$,  the ansatz \eqref{ansatz2} is not appropriate, since in this case, $Q_v$ solves the non-elliptic equation. We also remark that contrary to the half-Laplacian case, if $\sigma\in (\frac{1}{2},1)$, one can construct traveling solitons at any speed. 
\end{remark}

We now describe the general strategy of the proof:
\begin{itemize}
\item We first derive a Gagliardo-Nirenberg inequality for a suitable operator. 
\item We then introduce the functional to be minimized. The Euler-Lagrange equation of this functional gives us precisely the equation under consideration. 
\item By means of a profile decomposition, we provide the existence of a minimizer for this functional. 
\end{itemize}

\begin{remark}
In the paper \cite{ABS}, a detailed study of some problems in fluid dynamics is performed. It happens that our result Theorem \ref{existence} is contained in this paper. Indeed, in \cite{ABS}, Theorem 3.1 provides the existence of traveling waves for a model involving a pseudo-differential operator. Following the remarks page 1248, our pseudo-differential operators satisfies the necessary conditions so that their theorem applies. In spite of this, we emphasize that our proof is different from the one of Albert {\sl et} and that it comes from a different context.  
\end{remark}

\section{Reduction to the Case $k=1$, Gagliardo-Nirenberg Inequality for $\mathcal{P}_1$ and Minimization Problem}

For notational conveniences, we drop the index $\omega$ in $Q_{\omega,k}$. Let $Q_k(x)=|k|^\sigma Q_1(kx)$. Then, by scaling, one has 
\begin{equation}\label{scaling}
(\mathcal{P}_k Q_k)(x)=|k|^{3\sigma}(\mathcal{P}_1Q_1)(kx)
\end{equation}
Indeed, taking the Fourier transform
\begin{align*}
\widehat{(\mathcal{P}_k Q_k)}(\xi)&=(|\xi+k|^{2\sigma}-|k|^{2\sigma}-2\sigma |k|^{2\sigma-2}k\cdot\xi) |k|^{\sigma-1}\hat{Q}_1(\tfrac{\xi}{k})\\
&=|k|^{3\sigma-1}(|\tfrac{\xi}{k}+1|^{2\sigma}-1-2\sigma\tfrac{\xi}{k})\hat{Q}_1(\tfrac{\xi}{k})\\
&=|k|^{3\sigma-1} (\mathcal{P}_1Q_1)^\wedge(\tfrac{\xi}{k}).
\end{align*}
Hence, $Q_1$ solves
$$\mathcal{P}_1Q_1+\omega^{2\sigma}Q_1-|Q_1|^2Q_1=0$$
if and only if $Q_k$ solves 
$$\Big\{\mathcal{P}_k Q_k+(|k|^{2\sigma}\omega^{2\sigma})Q_k-|Q_k|^2Q_k\Big\}(x)=|k|^{3\sigma}\Big\{\mathcal{P}_1Q_1+\omega^{2\sigma}Q_1-|Q_1|^2Q_1\Big\}(kx)=0.$$
Therefore, without loss of generality, we reduce our study to the case $k=1$.

Now we state the Gagliardo-Nirenberg inequality associated with the operator $\mathcal{P}_1$.

\begin{theorem}[Gagliardo-Nirenberg inequality for $\mathcal{P}_1$]\label{GN for P_1} For $\theta\in(0,1)$, we have 
\begin{align*}
\|u\|_{L^4}^4&\lesssim \|u\|_{L^2}^{\frac{4\sigma-1}{\sigma}}\|\mathcal{P}_1^{1/2} u\|_{L^2}^{\frac{1}{\sigma}}+\alpha\|u\|_{L^2}^3\|\mathcal{P}_1^{1/2} u\|_{L^2}\\
&-\Big\{\|u\|_{L^2}^{\frac{4\sigma-1}{\sigma}}\|\mathcal{P}_1^{1/2} u\|_{L^2}^{\frac{1}{\sigma}}\Big\}^{1-\theta}\Big\{\alpha\|u\|_{L^2}^3\|\mathcal{P}_1^{1/2} u\|_{L^2}\Big\}^\theta,
\end{align*}
where $\alpha=\alpha(\sigma)=1/\sqrt{\sigma(2\sigma-1)}.$
\end{theorem}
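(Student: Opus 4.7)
My plan is to derive the inequality by splitting $u$ in frequency into the two regimes identified by \eqref{p_k asymptotics} and applying the appropriate standard Gagliardo--Nirenberg inequality on each piece. Write $u = u_\ell + u_h$, where $\widehat{u_\ell} = \mathbf{1}_{\{|\xi|\le 1\}}\hat u$ and $\widehat{u_h} = \mathbf{1}_{\{|\xi|>1\}}\hat u$. The target is to bound $\|u_\ell\|_{L^4}^4$ by $\alpha B := \alpha\|u\|_{L^2}^3\|\mathcal P_1^{1/2}u\|_{L^2}$ and $\|u_h\|_{L^4}^4$ by $A := \|u\|_{L^2}^{(4\sigma-1)/\sigma}\|\mathcal P_1^{1/2}u\|_{L^2}^{1/\sigma}$.

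On the low-frequency piece I exploit $p_1(0) = p_1'(0) = 0$ and $p_1''(\xi) = 2\sigma(2\sigma-1)|\xi+1|^{2\sigma-2}$, combined with the Taylor representation $p_1(\xi) = \int_0^\xi (\xi-s)\,p_1''(s)\,ds$, to obtain on $[-1,1]$ the bound $p_1(\xi) \ge c_\sigma|\xi|^2$ with $c_\sigma$ comparable to $\sigma(2\sigma-1)$. This yields the coercivity $\|u_\ell\|_{\dot H^1} \lesssim \alpha\,\|\mathcal P_1^{1/2}u_\ell\|_{L^2}$ with $\alpha = 1/\sqrt{\sigma(2\sigma-1)}$. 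On the high-frequency piece, the asymptotics $p_1(\xi) = |\xi|^{2\sigma} + O(|\xi|^{2\sigma-1})$ give $p_1(\xi) \gtrsim |\xi|^{2\sigma}$ for $|\xi|\ge 1$, hence $\|u_h\|_{\dot H^\sigma} \lesssim \|\mathcal P_1^{1/2}u_h\|_{L^2}$.

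I then apply the classical one-dimensional Gagliardo--Nirenberg inequalities $\|v\|_{L^4}^4 \lesssim \|v\|_{L^2}^3\|v\|_{\dot H^1}$ to $u_\ell$ and $\|v\|_{L^4}^4 \lesssim \|v\|_{L^2}^{(4\sigma-1)/\sigma}\|v\|_{\dot H^\sigma}^{1/\sigma}$ (valid for $\sigma>1/4$) to $u_h$. Since the frequency projections are contractive in $L^2$ and in the $\mathcal P_1^{1/2}$ seminorm, the coercivity bounds yield $\|u_\ell\|_{L^4}^4 \lesssim \alpha B$ and $\|u_h\|_{L^4}^4 \lesssim A$. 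The triangle inequality $\|u\|_{L^4} \le \|u_\ell\|_{L^4} + \|u_h\|_{L^4}$ together with $(x+y)^4 \lesssim x^4+y^4$ then gives the baseline bound $\|u\|_{L^4}^4 \lesssim A + \alpha B$. To recover the refined form featuring the subtracted geometric mean, Young's inequality $A^{1-\theta}(\alpha B)^\theta \le (1-\theta)A + \theta\alpha B$ gives
$$A + \alpha B - A^{1-\theta}(\alpha B)^\theta \;\ge\; \theta A + (1-\theta)\alpha B \;\ge\; \min(\theta,1-\theta)\,(A + \alpha B),$$
so the baseline estimate implies the stated inequality with an implicit constant depending on $\theta$. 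The strict negative term, although only cosmetic in magnitude for fixed $\theta\in(0,1)$, will later be essential in the Weinstein-type variational argument to exclude concentration dichotomy between the low- and high-frequency regimes.

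The main technical obstacle will be pinning down the low-frequency coercivity constant correctly in terms of $\sqrt{\sigma(2\sigma-1)}$ so as to track the degeneration as $\sigma \searrow 1/2$. Because $p_1''$ is not monotone on $[-1,1]$ (it blows up near $\xi=-1$ and is smallest at $\xi=1$), the Taylor-remainder analysis must be carried out with some care to extract the correct $\sigma$-dependence, and in particular to confirm that the factor $\alpha$ in the statement captures the right rate at which $\mathcal P_1$ becomes degenerate. Once this constant is in hand, everything else reduces to standard Gagliardo--Nirenberg estimates on complementary frequency supports.
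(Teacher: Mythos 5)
Your proposal follows the paper's proof essentially verbatim: split $u$ into low- and high-frequency pieces, apply the two classical one-dimensional Gagliardo--Nirenberg estimates $\|v\|_{L^4}^4\lesssim\|v\|_{L^2}^3\|v\|_{\dot H^1}$ and $\|v\|_{L^4}^4\lesssim\|v\|_{L^2}^{(4\sigma-1)/\sigma}\|v\|_{\dot H^\sigma}^{1/\sigma}$ to the respective pieces, convert $\dot H^1$ and $\dot H^\sigma$ seminorms into $\|\mathcal P_1^{1/2}\cdot\|_{L^2}$ via the pointwise bounds $p_1(\xi)\gtrsim|\xi|^2$ near the origin and $p_1(\xi)\gtrsim|\xi|^{2\sigma}$ away from it (the paper's Lemma \ref{p_k bound}), and finally use Young's inequality $a^{1-\theta}b^\theta\le(1-\theta)a+\theta b$ to restore the subtracted geometric-mean term. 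Your concern about precisely extracting the prefactor $\alpha=1/\sqrt{\sigma(2\sigma-1)}$ from the low-frequency Taylor remainder is a red herring: the stated inequality carries an implicit $\lesssim$ whose constant is allowed to depend on $\sigma$, so $\alpha$ need not be sharp in the coercivity step (its exact value is bookkeeping for the later variational comparison with $Q_{(1)}$ via the Pohozaev identities, not for this theorem), and the distinction between your sharp cutoff at $|\xi|=1$ and the paper's smooth cutoff $P_{\le 1/2}$ is immaterial in one dimension where both projections are $L^4$-bounded.
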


\begin{proof}
By Young's inequality, it suffices to show that
$$\|u\|_{L^4}^4\lesssim \|u\|_{L^2}^{\frac{4\sigma-1}{\sigma}}\|\mathcal{P}_1^{1/2} u\|_{L^2}^{\frac{1}{\sigma}}+\alpha \|u\|_{L^2}^3\|\mathcal{P}_1^{1/2} u\|_{L^2}.$$
Indeed, denoting 
$$
a=\|u\|_{L^2}^{\frac{4\sigma-1}{\sigma}}\|\mathcal{P}_1^{1/2} u\|_{L^2}^{\frac{1}{\sigma}},\,\,\,b=\alpha\|u\|_{L^2}^3\|\mathcal{P}_1^{1/2} u\|_{L^2},
$$
the inequality writes 
$$
\|u\|_{L^4}^4\lesssim a +b-a^{1-\theta}b^\theta. 
$$
If ones proves that $\|u\|_{L^4}^4\lesssim a +b$, then using Young's inequality $a^{1-\theta}b^\theta\leq (1-\theta)a+\theta b$, one gets 
$$
\|u\|_{L^4}^4\lesssim a +a b - a^{1-\theta}b^\theta +(1-\theta)a+\theta b.
$$
Therefore, relabeling the constants, this gives the desired result.

Let $\varphi\in C_c^\infty$ be a smooth cut-off such that $\varphi=1$ on $[-\frac{1}{4}, \frac{1}{4}]$ and $\supp\varphi\subset[-\frac{1}{2}, \frac{1}{2}]$. We denote by $P_{\leq1/2}$ the low frequency projection, i.e., $\widehat{P_{\leq1/2}f}(\xi)=\varphi(\xi)\hat{f}(\xi)$, and let $P_{>1/2}=1-P_{\leq1/2}$ be the high frequency projection. Then, by the standard Gagliardo-Nirenberg inequalities 
\begin{equation}\label{standard GN}
\|u\|_{L^4}^4\lesssim\|u\|_{L^2}^{\frac{4\sigma-1}{\sigma}}\||\nabla|^\sigma u\|_{L^2}^{\frac{1}{\sigma}},\ \|u\|_{L^4}^4\lesssim\|u\|_{L^2}^3\||\nabla| u\|_{L^2}
\end{equation}
and by Lemma \ref{p_k bound}, we prove that
\begin{align*}
\|u\|_{L^4}^4&\lesssim \|P_{>1/2}u\|_{L^4}^4+\|P_{\leq1/2}u\|_{L^4}^4\\
&\lesssim \|P_{>1/2}u\|_{L^2}^{\frac{4\sigma-1}{\sigma}}\||\nabla|^\sigma P_{>1/2}u\|_{L^2}^{\frac{1}{\sigma}}+\|P_{\leq1/2}u\|_{L^2}^3\|\nabla P_{\leq1/2}u\|_{L^2}\\
&\lesssim \|u\|_{L^2}^{\frac{4\sigma-1}{\sigma}}\|\mathcal{P}_1^{1/2} u\|_{L^2}^{\frac{1}{\sigma}}+\|u\|_{L^2}^3\|\mathcal{P}_1^{1/2} u\|_{L^2},
\end{align*}
where the implicit constants may depend on $\sigma$.
\end{proof}

Next, we introduce the functionals to be minimized. Let $\theta\in(0,1)$ be a number sufficiently close to $1$. We define $\mathcal{W}(u)$ by
\begin{equation}\label{mathcal W}
\mathcal{W}(u):=W_1(u)+W_2(u)-W_3(u),
\end{equation}
where
$$W_1(u):=\frac{\|u\|_{L^2}^{\frac{4\sigma-1}{\sigma}}\|\mathcal{P}_1^{1/2} u\|_{L^2}^{\frac{1}{\sigma}}}{\|u\|_{L^4}^4},\ W_2(u):=\alpha\frac{\|u\|_{L^2}^3\|\mathcal{P}_1^{1/2} u\|_{L^2}}{\|u\|_{L^4}^4}$$
and
$$W_3(u)=W_1(u)^{1-\theta}W_2(u)^\theta.$$
\begin{remark}
In view of the previous Gagliardo-Nirenberg inequality, it is natural to consider the functional $\mathcal{W}$. The reason why we introduced the interpolation term $W_3$ comes from the fact that we will construct the moving soliton using a profile decomposition and to reach a contradiction we need to somehow use minimization properties of well-known ground states associated to our problem. This third term $W_3$ will help in getting the contradiction. Notice that since $\theta$ will be taken close to $1$, the third term $W_3$ is close to $W_2$. 
\end{remark}

We will find a moving soliton from the minimization problem
$$\frac{1}{c_{GN}}=\inf_{u\in H^1}\mathcal{W}(u).$$
Note that $c_{GN}$ gives the sharp constant for Gagliardo-Nirenberg inequality (Theorem \ref{GN for P_1}).

We now show that any minimizer of the minimization problem is a weak solution of the desired Euler-Lagrange equation. 
\begin{lemma}\label{EL equation}
Suppose that $u$ is a minimizer of $\mathcal W(u)$ in $H^\sigma$. Then, $u$ is a weak solution to
$$\mathcal{P}_1 u +a u-bu^3=0$$
for some  $a, b>0$. Thus, $Q_1=b^{1/2} u$ solves
$$\mathcal{P}_1 Q_1+\omega^{2\sigma}Q_1-Q_1^3=0,\quad \omega=a^{1/2\sigma}.$$
\end{lemma}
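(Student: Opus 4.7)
The plan is to compute the Fréchet derivative of $\mathcal{W}$ at the minimizer $u$ and verify that the coefficients in the resulting Euler--Lagrange equation have the claimed signs. Abbreviate $A := \|u\|_{L^2}^2$, $B := \|\mathcal{P}_1^{1/2}u\|_{L^2}^2$, $C := \|u\|_{L^4}^4$, so that $\mathcal{W}(u) = F(A,B,C)$ for an explicit scalar function $F = W_1+W_2-W_3$ of three variables. Since $A'(u)\phi = 2\langle u,\phi\rangle$, $B'(u)\phi = 2\langle\mathcal{P}_1 u,\phi\rangle$ and $C'(u)\phi = 4\langle u^3,\phi\rangle$, the first-order condition $\mathcal{W}'(u) = 0$ reads, as a distribution,
\begin{equation*}
F_B\,\mathcal{P}_1 u + F_A\,u + 2F_C\,u^3 = 0,
\end{equation*}
and logarithmic differentiation of $W_1,W_2,W_3$ produces $F_C = -\mathcal{W}(u)/C$ together with
\begin{align*}
F_B &= \tfrac{1}{2B}\Bigl[\tfrac{W_1}{\sigma}+W_2 - W_3\bigl(\tfrac{1-\theta}{\sigma}+\theta\bigr)\Bigr], \\
F_A &= \tfrac{1}{2A}\Bigl[\tfrac{(4\sigma-1)W_1}{\sigma}+3W_2 - W_3\bigl(\tfrac{(1-\theta)(4\sigma-1)}{\sigma}+3\theta\bigr)\Bigr].
\end{align*}

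Next I verify the signs of $F_A, F_B, F_C$. Because the sharp Gagliardo--Nirenberg inequality of Theorem \ref{GN for P_1} gives $\mathcal{W}(u) \geq 1/c_{GN} > 0$, we immediately obtain $F_C < 0$. For $F_A$ and $F_B$ I control the subtracted term via the weighted AM--GM inequality $W_3 = W_1^{1-\theta}W_2^\theta \leq (1-\theta)W_1 + \theta W_2$, and then compare the resulting coefficients of $W_1$ and $W_2$ separately. The two $W_1$-coefficient inequalities reduce to $\sigma(1-\theta) \leq 2-\theta$ and $5\sigma - 2 + \theta(1-\sigma) \geq 0$, while the $W_2$-coefficient ones reduce to $\theta \leq \sigma/(1-\sigma)$ and $(\sigma-1)\theta \leq 3\sigma$. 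All four hold for $\sigma \in (\tfrac12,1)$ and $\theta \in (0,1)$, with the bound $5\sigma - 2 > 0$ making essential use of $\sigma > \tfrac12$ and matching the range in which Theorem \ref{GN for P_1} is available. This gives $F_A, F_B > 0$.

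Finally, since $F_B > 0$ one may divide through to obtain $\mathcal{P}_1 u + au - bu^3 = 0$ with $a := F_A/F_B > 0$ and $b := -2F_C/F_B = 2\mathcal{W}(u)/(CF_B) > 0$. Setting $Q_1 := b^{1/2}u$ absorbs the cubic coefficient into the profile, yielding $\mathcal{P}_1 Q_1 + aQ_1 - Q_1^3 = 0$, and defining $\omega := a^{1/(2\sigma)} > 0$ puts this in the desired form $\mathcal{P}_1 Q_1 + \omega^{2\sigma}Q_1 - Q_1^3 = 0$. The one genuinely delicate step is the sign verification for $F_A$ and $F_B$: since $\theta$ is chosen close to $1$, the interpolated term $W_3$ is close to $W_2$ and could in principle overpower the positive contributions from $W_1$ and $W_2$; it is precisely the ellipticity threshold $\sigma > \tfrac12$ that keeps the relevant linear expressions in $\sigma$ and $\theta$ non-negative, so the variational identity translates cleanly into the claimed elliptic equation.
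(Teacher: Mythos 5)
Your proposal is correct and follows essentially the same route as the paper: differentiate the Weinstein functional at a minimizer, identify the Euler--Lagrange equation, and certify the signs of the three coefficients via the weighted AM--GM bound $W_3 = W_1^{1-\theta}W_2^\theta \le (1-\theta)W_1 + \theta W_2$. Your reorganization through the scalar map $F(A,B,C)$ with $A=\|u\|_{L^2}^2$, $B=\|\mathcal{P}_1^{1/2}u\|_{L^2}^2$, $C=\|u\|_{L^4}^4$ is cleaner bookkeeping than the paper's term-by-term differentiation of $W_1,W_2,W_3$, and your explicit reduction of the positivity of $F_A,F_B$ to four elementary inequalities in $(\sigma,\theta)$ sharpens the paper's vague "provided $\theta<1$ is close enough to $1$" into the statement that positivity holds for every $\theta\in(0,1)$ once $\sigma\in(\tfrac12,1)$ — which is a useful observation, since $\theta$ is constrained for unrelated reasons later in the paper. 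Two small imprecisions worth fixing: (a) your remark that $5\sigma-2>0$ "makes essential use of $\sigma>\tfrac12$" overstates the case, as this only needs $\sigma>\tfrac25$; the binding constraint is the $W_2$-coefficient condition $\theta<\sigma/(1-\sigma)$ in $F_B$, which requires $\sigma\ge\tfrac12$ to cover the full range $\theta\in(0,1)$; and (b) for a complex-valued minimizer the cubic term in the first variation of $C$ is $|u|^2 u$ rather than $u^3$, so the EL equation should be written with $|u|^2u$ (the paper has the same slip in the lemma statement), and one tests against both $v$ and $iv$ to recover the full equation — you should make this explicit, as the paper does.
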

\begin{proof}
If $u$ is a minimizer, then for any compactly supported function $v$ on $\mathbb R$, we have
\begin{align*}
&\frac{d}{d\epsilon}\Big|_{\epsilon=0}W_1(u+\epsilon v)\\
&=\frac{d}{d\epsilon}\Big|_{\epsilon=0}\frac{\big(\|u+\epsilon v\|_{L^2}^2\big)^{\frac{4\sigma-1}{2\sigma}}\big(\|\mathcal{P}_1^{1/2}(u+\epsilon v)\|_{L^2}^2\big)^{\frac{1}{2\sigma}}}{\|u+\epsilon v\|_{L^4}^4}\\
&=\frac{(4\sigma-1)W_1(u)}{2\sigma\|u\|_{L^2}^2}\frac{d}{d\epsilon}\Big|_{\epsilon=0}\|u+\epsilon v\|_{L^2}^2+\frac{W_1(u)}{2\sigma\|\mathcal{P}_1^{1/2}u\|_{L^2}^2}\frac{d}{d\epsilon}\Big|_{\epsilon=0}\|\mathcal{P}_1^{1/2}(u+\epsilon v)\|_{L^2}^2\\
&\quad-\frac{W_1(u)}{\|u\|_{L^4}^4}\frac{d}{d\epsilon}\Big|_{\epsilon=0}\|u+\epsilon v\|_{L^4}^4\\
&=\frac{(4\sigma-1)W_1(u)}{\sigma \|u\|_{L^2}^2}\Re\int_{\mathbb{R}} u\bar{v}dx+\frac{W_1(u)}{\sigma\|\mathcal{P}_1^{1/2}u\|_{L^2}^2}\Re\int_{\mathbb{R}}(\mathcal{P}_1 u)\bar{v} dx-\frac{W_1(u)}{\|u\|_{L^4}^4}\Re\int_{\mathbb{R}}|u|^2u\bar{v}dx.
\end{align*}
Similarly, we compute $\frac{d}{d\epsilon}\big|_{\epsilon=0}W_2(u+\epsilon v)$ and $\frac{d}{d\epsilon}\big|_{\epsilon=0}W_3(u+\epsilon v)$. Then, collecting all, we obtain 
\begin{equation}
\begin{aligned}
0&=\frac{d}{d\epsilon}\Big|_{\epsilon=0}\mathcal{W}(u+\epsilon v)\\
&=\frac{1}{\|u\|_{L^2}^2}\Big\{\tfrac{4\sigma-1}{\sigma}W_1(u)+3W_2(u)-\big(\tfrac{4\sigma-1}{\sigma}(1-\theta)+3\theta\big)W_3(u)\Big\}\Re\int_{\mathbb{R}} u \overline{v} dx\\
&\quad+\frac{1}{\|\mathcal{P}_1^{1/2}u\|_{L^2}^2}\Big\{\tfrac{1}{\sigma}W_1(u)+W_2(u)-\big(\tfrac{1}{\sigma}(1-\theta)+\theta\big)W_3(u)\Big\}\Re\int_{\mathbb{R}}(\mathcal{P}_1 u) \overline{v}dx\\
&\quad-\frac{1}{\|u\|_{L^4}^4}\Big\{W_1(u)+W_2(u)-W_3(u)\Big\}\Re\int_{\mathbb{R}} u^3 \overline{v}dx.
\end{aligned}
\end{equation}
By Young's inequality, one can show that all the terms in front of the integrals are nonnegative, provided that $\theta<1$ is close enough to 1. Hence, $u$ solves $\Re\{\mathcal{P}_1 u +a u-bu^3\}=0$. Similarly, computing $\frac{d}{d\epsilon}|_{\epsilon=0}\mathcal{W}(u+i\epsilon v)=0$, we prove that $\Im\{\mathcal{P}_1 u +a u-bu^3\}=0$.
\end{proof}

\section{Existence of a Static Ground State}

In this section, in order to illustrate our strategy to prove the main theorem, we give a proof of existence of a static ground state, which is a well-known fact in the literature (see for instance \cite{FL} and the references therein).

\begin{theorem}[Existence of a static ground state]\label{existence of a static ground state}
For $\sigma\in(\frac{1}{4},1]$, there exists a minimizer $Q\in H^\sigma$ for the Weinstein functional
$$W(u):=\frac{\|u\|_{L^2}^{\frac{4\sigma-1}{\sigma}}\||\nabla|^\sigma u\|_{L^2}^{\frac{1}{\sigma}}}{\|u\|_{L^4}^4}$$
such that $Q$ solves the ground state equation
\begin{equation}\label{static elliptic equation}
(-\Delta)^\sigma Q+Q-Q^3=0.
\end{equation}
\end{theorem}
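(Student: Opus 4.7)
The plan is to implement Weinstein's classical minimization strategy, using a profile decomposition to bypass the translation invariance of $W$. The standard Gagliardo-Nirenberg inequality $\|u\|_{L^4}^4 \lesssim \|u\|_{L^2}^{(4\sigma-1)/\sigma}\||\nabla|^\sigma u\|_{L^2}^{1/\sigma}$ (valid for $\sigma>1/4$ by Sobolev embedding and interpolation) guarantees that $c_{GN}^{-1}:=\inf_{u\in H^\sigma\setminus\{0\}} W(u)>0$. Since $W$ is invariant under both the multiplicative rescaling $u\mapsto\lambda u$ and the spatial dilation $u\mapsto u(\mu\cdot)$, I would fix a minimizing sequence $\{u_n\}$ and use these two invariances to normalize $\|u_n\|_{L^2}=\||\nabla|^\sigma u_n\|_{L^2}=1$, whence $\|u_n\|_{L^4}^4\to c_{GN}$ and $\{u_n\}$ is bounded in $H^\sigma(\mathbb R)$.

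Next I would apply the G\'erard-type profile decomposition for bounded sequences in $H^\sigma(\mathbb R)$: up to a subsequence,
$$u_n=\sum_{j=1}^J\phi^{(j)}(\cdot-x_n^{(j)})+r_n^J,$$
with pairwise orthogonal translation centers ($|x_n^{(j)}-x_n^{(k)}|\to\infty$ for $j\neq k$), asymptotic orthogonality of $L^2$ and $\dot H^\sigma$ norms, a Brezis-Lieb-type decoupling $\|u_n\|_{L^4}^4=\sum_{j\leq J}\|\phi^{(j)}\|_{L^4}^4+o_n(1)+o_J(1)$, and a smallness remainder $\lim_{J\to\infty}\limsup_{n\to\infty}\|r_n^J\|_{L^4}=0$.

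Setting $a_j:=\|\phi^{(j)}\|_{L^2}^2$ and $b_j:=\||\nabla|^\sigma\phi^{(j)}\|_{L^2}^2$, the orthogonality gives $\sum_j a_j\leq 1$ and $\sum_j b_j\leq 1$. Applying the Gagliardo-Nirenberg inequality to each profile yields $\|\phi^{(j)}\|_{L^4}^4\leq c_{GN}\,a_j^{(4\sigma-1)/(2\sigma)}b_j^{1/(2\sigma)}$, and since the two exponents sum to exactly $2$, weighted Young gives
$$a_j^{(4\sigma-1)/(2\sigma)}b_j^{1/(2\sigma)}\leq\frac{4\sigma-1}{4\sigma}a_j^2+\frac{1}{4\sigma}b_j^2.$$
Combined with the elementary bound $\sum_j a_j^2\leq(\sum_j a_j)^2\leq 1$ (and analogously for $b_j$), summation gives $\sum_j\|\phi^{(j)}\|_{L^4}^4\leq c_{GN}$; comparing with $\sum_j\|\phi^{(j)}\|_{L^4}^4\to c_{GN}$ forces equality throughout, so exactly one profile $\phi^{(1)}=:Q$ survives with $a_1=b_1=1$, and $r_n^1\to 0$ strongly in $H^\sigma$. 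Hence $u_n(\cdot+x_n^{(1)})\to Q$ in $H^\sigma$ and $Q$ attains the infimum. To conclude, I would compute the Euler-Lagrange equation of $W$ at $Q$ exactly as in the proof of Lemma~\ref{EL equation}, obtaining $(-\Delta)^\sigma Q+aQ-bQ^3=0$ for some $a,b>0$, and then rescale $Q\mapsto\lambda Q(\mu\cdot)$ with $\mu=a^{1/(2\sigma)}$ and $\lambda=(a/b)^{1/2}$ to normalize the coefficients to $1$.

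The main obstacle is step 3: the translation invariance of $W$ means the minimizing sequence may vanish at infinity or split into pieces that drift apart, and the profile decomposition is precisely the tool that captures any such translations while producing the orthogonality bookkeeping. The delicate numerical feature that makes the argument close is that the Gagliardo-Nirenberg exponents $(4\sigma-1)/(2\sigma)$ and $1/(2\sigma)$ sum to exactly $2$, which is what saturates the profile-by-profile bound to $c_{GN}$ and forces a single nonzero profile carrying the full $L^2$ and $\dot H^\sigma$ mass; this is the instance of Lions' strict subadditivity in this setting.
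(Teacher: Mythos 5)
Your proof is correct, but it takes a genuinely different route from the paper's. The paper applies the bubble decomposition with $J=1$, writing $u_n=\phi+R_n$, and then rules out dichotomy by a \emph{derivative argument}: if both $\phi$ and $R_n$ carry a nontrivial fraction of the mass, one shows $\tfrac{d}{dt}\big|_{t=1}W(t\phi+R_n)\geq\delta>0$, so that shrinking $t$ a little strictly lowers $W$ below the infimum. You instead run the classical Lions strict-subadditivity argument in its profile-decomposition form: apply Gagliardo--Nirenberg to \emph{every} profile, replace each $a_j^{(4\sigma-1)/(2\sigma)}b_j^{1/(2\sigma)}$ by the weighted AM--GM majorant $\tfrac{4\sigma-1}{4\sigma}a_j^2+\tfrac{1}{4\sigma}b_j^2$, and exploit $\sum a_j^2\leq(\sum a_j)^2\leq 1$ to close a chain of inequalities that must all be equalities, forcing a single nonzero profile with $a_1=b_1=1$ at which Gagliardo--Nirenberg is saturated. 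Both arguments are valid and rest on exactly the same profile decomposition (Proposition~\ref{bubble decomposition}); yours is the more standard and arguably cleaner one for this static problem, and it has the pleasant feature that it identifies the minimizer directly from the saturation of the inequalities rather than by contradiction. The paper's derivative argument is deliberately chosen because it is the one that survives the passage to the traveling case in Section~4, where the functional $\mathcal{W}$ has three terms and is no longer scale-invariant, so there is no single homogeneity degree $2$ to make your Young-inequality bookkeeping close so neatly.

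Two minor slips in the final rescaling: writing $\tilde Q(x)=\lambda Q(\mu x)$ with $Q$ solving $(-\Delta)^\sigma Q+aQ-bQ^3=0$, one needs $\mu=a^{-1/(2\sigma)}$ and $\lambda=(b/a)^{1/2}$ (not $\mu=a^{1/(2\sigma)}$, $\lambda=(a/b)^{1/2}$) to arrive at $(-\Delta)^\sigma\tilde Q+\tilde Q-\tilde Q^3=0$. These are arithmetic typos and do not affect the structure of the argument.
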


We will prove the theorem by the standard concentration-compactness argument, developed by P.L. Lions \cite{lions1,lions2}, in the form of the bubble decomposition as in \cite{gerard} (see also \cite{HK, KV}).

\begin{proposition}[Bubble decomposition]\label{bubble decomposition}
Let $\sigma\in(\frac{1}{4},1]$. Suppose that $\{u_n\}_{n=1}^\infty$ be a bounded sequence in $H^\sigma$. Then there exist a subsequence of $\{u_n\}_{n=1}^\infty$ (still denoted by $\{u_n\}_{n=1}^\infty$), a family of sequences of translation parameters $\{x_n^j\}_{n=1}^\infty$, $j=1,2,\cdots$, and a sequence of $H^\sigma$ functions $\{\phi^j\}_{j=1}^\infty$ such that for every $J\geq 1$,
$$u_n(x)=\sum_{j=1}^J \phi^j(x-x_n^j)+R_n^J$$
satisfying the following properties:\\
$(i)$ (Asymptotic orthogonality) For each $j\neq j'$, $|x_n^j-x_n^{j'}|\to\infty$ as $n\to\infty$. Moreover, for $0\leq j\leq J$, $R_n^J(\cdot+x_n^j)\rightharpoonup 0$ in $H^\sigma$ as $n\to\infty$.\\
$(ii)$ (Remainder estimate)
$$\lim_{J\to\infty}\limsup_{n\to\infty}\|R_n^J\|_{L^4}=0.$$
$(iii)$ (Asymptotic Pythagorean expansions)
\begin{align*}
\|u_n\|_{L^2}^2&=\sum_{j=1}^J\|\phi^j\|_{L^2}^2+\|R_n^J\|_{L^2}^2+o_n(1),\\
\||\nabla|^\sigma u_n\|_{L^2}^2&=\sum_{j=1}^J\||\nabla|^\sigma\phi^j\|_{L^2}^2+\||\nabla|^\sigma R_n^J\|_{L^2}^2+o_n(1),\\
\|u_n\|_{L^4}^4&=\sum_{j=1}^J\|\phi^j\|_{L^4}^4+\|R_n^J\|_{L^4}^4+o_n(1).
\end{align*}
\end{proposition}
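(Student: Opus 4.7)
The plan is to adapt the classical Gérard-type bubble decomposition to the inhomogeneous fractional Sobolev space $H^\sigma(\mathbb R)$. The first step is to establish a refined fractional Sobolev inequality of the form
\[
\|u\|_{L^4}\lesssim \|u\|_{H^\sigma}^{1-\tau}\,\|u\|_{\dot B^{s_0}_{\infty,\infty}}^{\tau}
\]
for some $\tau\in(0,1)$ and $s_0<0$; in one dimension this holds as soon as $\sigma>\tfrac14$ and follows from a Littlewood-Paley decomposition combined with the elementary bound $\|P_N u\|_{L^4}^4\lesssim \|P_N u\|_{L^\infty}^2\|P_N u\|_{L^2}^2$. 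The key consequence, which will drive the entire extraction procedure, is a lack-of-compactness dichotomy: if $\{v_n\}\subset H^\sigma$ is bounded and every weak-$H^\sigma$ limit of every translated subsequence $\{v_n(\cdot+y_n)\}$ vanishes, then $\|v_n\|_{L^4}\to 0$.

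\medskip

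\emph{Extraction of profiles.} Next I would define
\[
\eta(\{v_n\}):=\sup\bigl\{\|\phi\|_{H^\sigma}\ :\ \exists\, y_n\in\mathbb R,\ v_n(\cdot+y_n)\rightharpoonup\phi\text{ in }H^\sigma\text{ along a subsequence}\bigr\}.
\]
If $\eta(\{u_n\})=0$, the dichotomy yields the decomposition trivially with $\phi^j\equiv 0$ for every $j$. Otherwise I would choose $x_n^1$ and $\phi^1$ with $\|\phi^1\|_{H^\sigma}\ge\tfrac12\eta(\{u_n\})$ realising the supremum up to a factor, set $R_n^1:=u_n-\phi^1(\cdot-x_n^1)$, and iterate: at step $j$ extract $(x_n^j,\phi^j)$ from $\{R_n^{j-1}\}$ with $\|\phi^j\|_{H^\sigma}\ge\tfrac12\eta(\{R_n^{j-1}\})$ and set $R_n^j:=R_n^{j-1}-\phi^j(\cdot-x_n^j)$. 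A standard diagonal argument will synchronize the countably many subsequences.

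\medskip

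\emph{Orthogonality and Pythagorean expansions.} By construction $R_n^j(\cdot+x_n^j)\rightharpoonup 0$, so the $L^2$ and $\dot H^\sigma$ Pythagorean identities will fall out of expanding inner products and using weak convergence. Summing these identities gives $\sum_{j\ge1}\|\phi^j\|_{H^\sigma}^2\le\limsup_n\|u_n\|_{H^\sigma}^2<\infty$, whence $\|\phi^j\|_{H^\sigma}\to 0$ and a fortiori $\eta(\{R_n^{j}\})\to 0$. To obtain asymptotic orthogonality $|x_n^j-x_n^{j'}|\to\infty$ for $j\ne j'$, I would argue by contradiction: if $x_n^j-x_n^{j'}\to z$ along a subsequence with $j'<j$, then a translation and linearity argument transfers the nontrivial weak limit $\phi^{j'}$ of $u_n(\cdot+x_n^{j'})$ into a nontrivial weak limit of $R_n^{j-1}(\cdot+x_n^j)$, contradicting the extraction at step $j$. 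The $L^4$ Pythagorean expansion will then combine a Brezis-Lieb argument with this scale separation.

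\medskip

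\emph{Remainder and main obstacle.} Finally, applying the refined Sobolev inequality to $R_n^J$ and exploiting $\eta(\{R_n^J\})\to 0$ to show that $\|R_n^J\|_{\dot B^{s_0}_{\infty,\infty}}\to 0$ as $J\to\infty$ (uniformly in $n$ after diagonal extraction) will give $\limsup_n\|R_n^J\|_{L^4}\to 0$. I expect the main technical obstacle to be establishing the refined fractional Sobolev inequality and the corresponding dichotomy cleanly for $\sigma\in(\tfrac14,1]$; everything else (iteration, orthogonality, Brezis-Lieb) is bookkeeping that is by now routine in the Bahouri-Gérard framework.
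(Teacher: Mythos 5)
The paper states this bubble decomposition without proof, referring to G\'erard's translation-only profile decomposition and its elaborations, and your plan is exactly that approach: a refined one-dimensional fractional Gagliardo--Nirenberg inequality, iterated extraction of weak translated limits via the functional $\eta$, asymptotic orthogonality by induction, and a Brezis--Lieb step for the $L^4$ Pythagorean expansion. One correction to your orthogonality argument: if $x_n^j-x_n^{j'}\to z$ for some $j'<j$, the correct contradiction is that $R_n^{j-1}(\cdot+x_n^j)\rightharpoonup 0$ --- writing $R_n^{j-1}=R_n^{j'-1}-\sum_{i=j'}^{j-1}\phi^i(\cdot-x_n^i)$ and translating by $x_n^j$, the weak limit $\phi^{j'}(\cdot+z)$ of $R_n^{j'-1}(\cdot+x_n^j)$ is exactly cancelled by the strongly convergent term $\phi^{j'}(\cdot+x_n^j-x_n^{j'})\to\phi^{j'}(\cdot+z)$, while the remaining $\phi^i$ terms with $j'<i<j$ vanish weakly by the inductively established separation --- which contradicts $\phi^j\neq 0$; the profile $\phi^{j'}$ is not ``transferred into'' $R_n^{j-1}(\cdot+x_n^j)$, it is subtracted out.
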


\begin{proof}[Proof of Theorem \ref{existence of a static ground state}]
\textit{Step 1. (Formulation of a two-bubble decomposition)} Let $\{u_n\}_{n=1}^\infty$ be a minimizing sequence in $H^\sigma$ for $W(u)$, i.e., $W(u_n)\to\frac{1}{c_{GN}}$, where $c_{GN}$ is the sharp constant for the Gagliardo-Nirenberg inequality
\begin{equation}\label{GN inequality}
\|u\|_{L^4}^4\leq c\|u\|_{L^2}^{\frac{4\sigma-1}{\sigma}}\||\nabla|^\sigma u\|_{L^2}^{\frac{1}{\sigma}}.
\end{equation}
Since the functional $W(u)$ is invariant under multiplication by a constant and scaling, i.e., $W(\alpha u(\frac{\cdot}{\lambda}))=W(u)$ for all $\alpha\in\mathbb{C}$ and $\lambda>0$, replacing $u_n$ by $\alpha_nu_n(\lambda_n\cdot)$ with $\alpha_n=\frac{\lambda_n^{1/2}}{\|u_n\|_{L^2}}$ and $\lambda_n=\big(\frac{\|u_n\|_{L^2}}{\||\nabla|^{\sigma}u_n\|_{L^2}}\big)^{1/\sigma}$, we may assume that $\|u_n\|_{L^2}=\||\nabla|^\sigma u_n\|_{L^2}=1$ for each $n$. Obviously, $\{u_n\}_{n=1}^\infty$ is bounded in $H^\sigma$.

Now, applying the bubble decomposition (Proposition \ref{bubble decomposition}) with $J=1$ to the bounded sequence $\{u_n\}_{n=1}^\infty$, extracting a subsequence, we write
$$u_n=\phi(\cdot-x_n)+R_n,\ \phi\neq 0.$$
Indeed, if there is no such non-zero $\phi$ $(\Rightarrow u_n=R_n)$, then by Proposition \ref{bubble decomposition} $(ii)$,
$$W(u_n)=\frac{\|u_n\|_{L^2}^{\frac{4\sigma-1}{\sigma}}\||\nabla|^\sigma u_n\|_{L^2}^{\frac{1}{\sigma}}}{\|u_n\|_{L^4}^4}=\frac{1}{\|R_n\|_{L^4}^4}\to\infty,$$
which contradicts the minimality of $\{u_n\}_{n=1}^\infty$. Moreover, we may replace $u_n$ by $u_n(\cdot+x_n)$ because of translation invariance of the functional $W$, i.e., $W((u(\cdot-b))=W(u)$ for $b\in\mathbb{R}$. As a result, we get a two-bubble decomposition
$$u_n=\phi+R_n$$
with the properties in Proposition \ref{bubble decomposition}. Here, extracting a subsequence if necessary, we may assume that $\|R_n\|_{L^2}$, $\||\nabla|^\sigma R_n\|_{L^2}$ and $\|R_n\|_{L^4}^4$ are convergent as $n\to\infty$.

Suppose that $R_n\to 0$ in $H^\sigma$. Then, $\phi$ is a minimizer for $W(u)$, because if $u_n\to \phi$ in $H^\sigma$ (so, $u_n\to \phi$ in $L^4$ by the Sobolev imbedding), then
$$\frac{1}{c_{GN}}=\lim_{n\to\infty}W(u_n)=\frac{\|u_n\|_{L^2}^{\frac{4\sigma-1}{\sigma}}\||\nabla|^\sigma u_n\|_{L^2}^{\frac{1}{\sigma}}}{\|u_n\|_{L^4}^4}=\frac{\|\phi\|_{L^2}^{\frac{4\sigma-1}{\sigma}}\||\nabla|^\sigma \phi\|_{L^2}^{\frac{1}{\sigma}}}{\|\phi\|_{L^4}^4}=W(\phi).$$
Thus, we only need to preclude the dichotomy scenario that $\{u_n\}_{n=1}^\infty$ has two non-negligible profiles, in other words,
\begin{equation}\label{static contradiction assumption}
\lim_{n\to\infty}\|R_n\|_{L^2},\lim_{n\to\infty}\||\nabla|^\sigma R_n\|_{L^2}\geq \epsilon>0.
\end{equation}
\textit{Step 2. (Elimination of the dichotomy scenario)} Suppose that \eqref{static contradiction assumption} holds. Then, we aim to deduce a contradiction by showing that $\frac{d}{dt}\big|_{t=1}W(t\phi+R_n)>c>0$. Indeed, if so, there exists small $\delta>0$ such that $W((1-\delta)\phi+R_n)\leq W(u_n)-\frac{c}{2}\delta$, which contradicts the minimality of $\{u_n\}_{n=1}^\infty$.

To this end, we compute
\begin{equation}\label{static derivative calc 1}
\begin{aligned}
\frac{d}{dt}\Big|_{t=1}\|t\phi+R_n\|_{L^2}^2&=\frac{d}{dt}\Big|_{t=1} \Big\{t^2 \|\phi\|_{L^2}^2+2t\Re \int_{\mathbb{R}} \phi\overline{R_n} dx+\|R_n\|_{L^2}^2\Big\}\\
&=2 \|\phi\|_{L^2}^2+2\Re \int_{\mathbb{R}}\phi\overline{R_n} dx\\
&=2\|\phi\|_{L^2}^2+o_n(1)\quad\textup{(by Proposition \ref{bubble decomposition} $(ii)$)}
\end{aligned}
\end{equation}
and similarly,
\begin{equation}\label{static derivative calc 2}
\frac{d}{dt}\Big|_{t=1}\||\nabla|^\sigma(t\phi+R_n)\|_{L^2}^2=\cdots=2\||\nabla|^\sigma\phi\|_{L^2}^2+o_n(1).
\end{equation}
Moreover, we have
\begin{equation}\label{static derivative calc 3}
\frac{d}{dt}\Big|_{t=1}\|t\phi+R_n\|_{L^4}^4=4\|\phi\|_{L^4}^4+\textup{cross terms}.
\end{equation}
We claim that each cross term in \eqref{static derivative calc 3} converges to zero as $n\to\infty$, and thus
\begin{equation}\label{static derivative calc 3'}
\frac{d}{dt}\Big|_{t=1}\|t\phi+R_n\|_{L^4}^4=4\|\phi\|_{L^4}^4+o_n(1).
\end{equation}
Among cross terms in \eqref{static derivative calc 3}, we only consider
$$\int_{\mathbb{R}} \phi|R_n|^2 \overline{R_n}dx,$$
since other cross terms can be treated similarly. By the profile decomposition, passing to a subsequence,
$$u_n(x)=\phi+R_n=\phi+\sum_{j=2}^J \phi^j(x-x_n^j)+R_n^J(x),$$
where $|x_n^j|\to \infty$ (since the first profile $\phi$ is not translated) and $|x_n^{j}-x_n^{j'}|\to\infty$ for $j\neq j'$. Thus, by the asymptotic orthogonality of $\{x_n^j\}_{n=1}^\infty$ and H\"older inequality, 
\begin{align*}
\Big|\int_{\mathbb{R}}\phi|R_n|^2 \overline{R_n}dx\Big|&\leq\sum_{j_1,j_2,j_3=2}^J\Big|\int_{\mathbb{R}} \phi(x) \overline{\phi^{j_1}(\cdot-x_n^{j_1})}\phi^{j_2}(x-x_n^{j_2})\overline{\phi^{j_3}(\cdot-x_n^{j_3})}dx\Big|\\
&\quad+\Big|\int_{\mathbb{R}}\phi|R_n^J|^2\overline{R_n^J}dx\Big|\\
&\leq o_n(1)+\|\phi\|_{L^4}\|R_n^J\|_{L^4}^3\to 0\textup{ as }n, J\to\infty.
\end{align*}

Using \eqref{static derivative calc 1}, \eqref{static derivative calc 2} and \eqref{static derivative calc 3'}, we compute
\begin{equation}\label{static derivative calc}
\begin{aligned}
&\frac{d}{dt}\Big|_{t=1}W(t\phi+R_n)\\
&=\frac{d}{dt}\Big|_{t=1}\frac{\|t\phi+R_n\|_{L^2}^{\frac{4\sigma-1}{\sigma}}\||\nabla|^\sigma (t\phi+R_n)\|_{L^2}^{\frac{1}{\sigma}}}{\|t\phi+R_n\|_{L^4}^4}\\
&=\frac{d}{dt}\Big|_{t=1}\frac{\Big\{t^2\|\phi\|_{L^2}^2+\|R_n\|_{L^2}^2\Big\}^{\frac{4\sigma-1}{2\sigma}}\Big\{t^2\||\nabla|^\sigma \phi\|_{L^2}^2+\||\nabla|^\sigma R_n\|_{L^2}^2\Big\}^{\frac{1}{2\sigma}}}{t^4\|\phi\|_{L^4}^4+\|R_n\|_{L^4}^4}+o_n(1)\\
&=\cdots\\
&=W(u_n)\Big\{\frac{4\sigma-1}{\sigma}\frac{\|\phi\|_{L^2}^2}{\|u_n\|_{L^2}^2}+\frac{1}{\sigma}\frac{\||\nabla|^\sigma \phi\|_{L^2}^2}{\||\nabla|^\sigma u_n\|_{L^2}^2}-4\frac{\|\phi\|_{L^4}^4}{\|u_n\|_{L^4}^4}\Big\}+o_n(1).
\end{aligned}
\end{equation}
Then, by the Gagliardo-Nirenberg inequality \eqref{GN inequality}, the choice of $\{u_n\}_{n=1}^\infty$ ($W(u_n)\to \frac{1}{c_{GN}}$) and Young's inequality $ab\leq\frac{4\sigma-1}{4\sigma}a^{\frac{4\sigma}{4\sigma-1}}+\frac{1}{4\sigma}b^{4\sigma}$, we have
\begin{align*}
\frac{\|\phi\|_{L^4}^4}{\|u_n\|_{L^4}^4}&\leq\frac{c_{GN}\|\phi\|_{L^2}^{\frac{4\sigma-1}{\sigma}}\||\nabla|^\sigma \phi\|_{L^2}^{\frac{1}{\sigma}}}{c_{GN}\|u_n\|_{L^2}^{\frac{4\sigma-1}{\sigma}}\||\nabla|^\sigma u_n\|_{L^2}^{\frac{1}{\sigma}}}+o_n(1)\\
&=\Big(\frac{\|\phi\|_{L^2}}{\|u_n\|_{L^2}}\Big)^{\frac{4\sigma-1}{\sigma}}\Big(\frac{\||\nabla|^\sigma \phi\|_{L^2}}{\||\nabla|^\sigma u_n\|_{L^2}}\Big)^{\frac{1}{\sigma}}+o_n(1)\\
&\leq \frac{4\sigma-1}{4\sigma}\Big(\frac{\|\phi\|_{L^2}}{\|u_n\|_{L^2}}\Big)^4+\frac{1}{4\sigma}\Big(\frac{\||\nabla|^\sigma \phi\|_{L^2}}{\||\nabla|^\sigma u_n\|_{L^2}}\Big)^4+o_n(1).
\end{align*}
Inserting this inequality into \eqref{static derivative calc}, we obtain
\begin{align*}
&\frac{d}{dt}\Big|_{t=1}W(t\phi+R_n)\\
&\geq \frac{1}{c_{GN}}\Big\{\frac{4\sigma-1}{\sigma}\frac{\|\phi\|_{L^2}^2}{\|u_n\|_{L^2}^2}\Big(1-\frac{\|\phi\|_{L^2}^2}{\|u_n\|_{L^2}^2}\Big)+\frac{1}{\sigma}\frac{\||\nabla|^\sigma \phi\|_{L^2}^2}{\||\nabla|^\sigma u_n\|_{L^2}^2}\Big(1-\frac{\||\nabla|^\sigma \phi\|_{L^2}^2}{\||\nabla|^\sigma u_n\|_{L^2}^2}\Big)\Big\}+o_n(1)\\
&=\frac{1}{c_{GN}}\Big\{\frac{4\sigma-1}{\sigma}\|\phi\|_{L^2}^2\|R_n\|_{L^2}^2+\frac{1}{\sigma}\||\nabla|^\sigma \phi\|_{L^2}^2\||\nabla|^\sigma R_n\|_{L^2}^2\Big\}+o_n(1),
\end{align*}
where in the last identity, we used Proposition \ref{bubble decomposition} $(iv)$ and $\|u_n\|_{L^2}=\||\nabla|^\sigma u_n\|_{L^2}=1$. Therefore, by the assumption \eqref{static contradiction assumption}, we conclude that
$$\frac{d}{dt}\Big|_{t=1}W(t\phi+R_n)\geq \frac{4\sigma-1}{c_{GN}\sigma}\|\phi\|_{L^2}^2\epsilon+o_n(1)\textup{, or }\geq\frac{1}{c_{GN}\sigma}\||\nabla|^\sigma \phi\|_{L^2}^2\epsilon+o_n(1),$$
which contradicts from minimality of $\{u_n\}_{n=1}^\infty$.\\
\textit{Step 3. (Euler-Lagrange equation)} From Step 1 and 2, we obtained a minimizer for the Weinstein functional $W(u)$. Hence, modifying the minimizer $\phi$ by $Q=\alpha\phi(\lambda\cdot)$ as in the proof of Lemma \ref{EL equation}, one can show that there is a minimizer solving the equation \eqref{static elliptic equation}.
\end{proof}

\section{Proof of Theorem \ref{existence}}

We prove our main theorem following the argument in the proof of Theorem \ref{existence of a static ground state}.

\subsection{Two-bubble decomposition}
The first step is to formulate a two-bubble decomposition for a minimizing sequence. The main difference from the two-bubble decomposition in the previous section is that it has a scaling parameter $\lambda_n$ (see \eqref{two-bubble decomposition}). The reason we need $\lambda_n$ is that unlike the previous case, one cannot modify a minimizing sequence easily by scaling to make it bounded in $H^\sigma$, because the Weinstein functional $\mathcal{W}(u)$ is not scaling-invariant.

\begin{lemma}[Two-bubble decomposition]
There exist a minimizing sequence $\{u_n\}_{n=1}^\infty$ for the Weinstein functional $\mathcal{W}(u)$ and a sequence of scaling parameters $\{\lambda_n\}_{n=1}^\infty$, with $\lambda_n=1$, $\lambda_n\to 0$ or $\lambda_n\to+\infty$, such that 
\begin{equation}\label{two-bubble decomposition}
v_n(x):=u_n(\lambda_nx)=\phi(x)+R_n(x),\quad\phi\neq0,
\end{equation}
satisfying the asymptotic Pythagorean expansions,
\begin{equation}\label{Properties of two profiles}
\begin{aligned}
\|v_n\|_{L^2}^2&=\|\phi\|_{L^2}^2+\|R_n\|_{L^2}^2+o_n(1),\\
\||\nabla|^\sigma v_n\|_{L^2}^2&=\||\nabla|^\sigma\phi\|_{L^2}^2+\|R_n\|_{L^2}^2+o_n(1),\\
\|\mathcal{P}_{\lambda_n}^{1/2} v_n\|_{L^2}^2&=\|\mathcal{P}_{\lambda_n}^{1/2}\phi\|_{L^2}^2+\|\mathcal{P}_{\lambda_n}^{1/2} R_n\|_{L^2}^2+o_n(1),\\
\|v_n\|_{L^4}^4&=\|\phi\|_{L^4}^4+\|R_n\|_{L^4}^4+o_n(1).
\end{aligned}
\end{equation}
\end{lemma}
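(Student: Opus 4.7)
The strategy is to exploit the two available invariances of $\mathcal{W}$---multiplication by constants (a genuine invariance) and spatial dilation $u\mapsto u(\lambda\cdot)$ (a covariance, since the Fourier identity $p_1(\xi/\lambda)=\lambda^{-2\sigma}p_\lambda(\xi)$ converts $\mathcal{P}_1$ into $\mathcal{P}_\lambda$)---to produce a normalized rescaled sequence $v_n(x):=u_n(\lambda_n x)$, and then to apply the bubble decomposition of Proposition \ref{bubble decomposition} to $v_n$. The new ingredient beyond Theorem \ref{existence of a static ground state} is the Pythagorean identity for the $n$-dependent operator $\mathcal{P}_{\lambda_n}^{1/2}$.

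Step one: starting from any minimizing sequence, use the constant-multiplication invariance together with the one-parameter rescaling freedom to pick $u_n$ and $\lambda_n>0$ so that $v_n(x):=u_n(\lambda_n x)$ satisfies $\|v_n\|_{L^2}=\|v_n\|_{L^4}=1$. The scaling laws
\[
\|u_n\|_{L^p}^p=\lambda_n\|v_n\|_{L^p}^p\quad(p=2,4),\qquad \|\mathcal{P}_1^{1/2}u_n\|_{L^2}^2=\lambda_n^{1-2\sigma}\|\mathcal{P}_{\lambda_n}^{1/2}v_n\|_{L^2}^2
\]
let me rewrite $\mathcal{W}(u_n)$ as an explicit function of $\|\mathcal{P}_{\lambda_n}^{1/2}v_n\|_{L^2}$ and $\lambda_n$; the convergence $\mathcal{W}(u_n)\to1/c_{GN}$ then yields uniform upper and lower bounds on $\|\mathcal{P}_{\lambda_n}^{1/2}v_n\|_{L^2}$. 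Passing to a subsequence, I arrange $\lambda_n\to\lambda_*\in[0,\infty]$; the three cases $\lambda_*\in\{0,1,\infty\}$ of the statement exhaust all possibilities, a positive finite limit being absorbed into the case $\lambda_n=1$ by a further constant rescaling.

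Step two: in each of these regimes the convexity $p_{\lambda_n}''\geq 0$ from \eqref{p_k} together with the high-frequency asymptotics $p_{\lambda_n}(\xi)\sim|\xi|^{2\sigma}$ from \eqref{p_k asymptotics} yield $p_{\lambda_n}(\xi)\gtrsim|\xi|^{2\sigma}$ on $\{|\xi|\geq 1\vee\lambda_n\}$; combined with the $L^2$-normalization, this converts the bound on $\|\mathcal{P}_{\lambda_n}^{1/2}v_n\|_{L^2}$ into a uniform $H^\sigma$-bound on $v_n$. Applying Proposition \ref{bubble decomposition} with $J=1$ yields $v_n=\phi(\cdot-x_n)+R_n$, and after translating the origin to $x_n$ I obtain $v_n=\phi+R_n$ with $\phi\neq 0$, the non-triviality being forced exactly as in Step 1 of the proof of Theorem \ref{existence of a static ground state} via the $L^4$-normalization and Proposition \ref{bubble decomposition}(ii). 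The first, second and fourth Pythagorean expansions of \eqref{Properties of two profiles} then follow directly from Proposition \ref{bubble decomposition}(iii).

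The main obstacle is the third Pythagorean expansion, whose operator $\mathcal{P}_{\lambda_n}^{1/2}$ carries an $n$-dependent symbol. I expand
\[
\|\mathcal{P}_{\lambda_n}^{1/2}v_n\|_{L^2}^2-\|\mathcal{P}_{\lambda_n}^{1/2}\phi\|_{L^2}^2-\|\mathcal{P}_{\lambda_n}^{1/2}R_n\|_{L^2}^2 =2\Re\int p_{\lambda_n}(\xi)\,\hat\phi(\xi)\,\overline{\hat R_n(\xi)}\,d\xi
\]
and reduce matters to showing the right-hand side is $o_n(1)$. The uniform pointwise bound $p_{\lambda_n}(\xi)\lesssim|\xi|^{2\sigma}+|\xi|^2\,\mathbf{1}_{|\xi|\leq 2\lambda_n}$, derived from \eqref{p_k asymptotics} and convexity, lets me split the cross term into a high-frequency contribution, handled by the weak $H^\sigma$-convergence $R_n\rightharpoonup 0$ of Proposition \ref{bubble decomposition}(i), and a low-frequency contribution. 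The latter is the delicate piece in the regime $\lambda_n\to\infty$, where the $|\xi|^2$-behavior of $p_{\lambda_n}$ forces an additional frequency-cutoff argument combining the uniform $\mathcal{P}_{\lambda_n}^{1/2}$-bound on $v_n$ with the $L^2$-weak convergence of $R_n$.
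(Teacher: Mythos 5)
Your overall architecture matches the paper (rescale to get $v_n=u_n(\lambda_n\cdot)$, apply Proposition~\ref{bubble decomposition}, then verify the Pythagorean expansion for the $n$-dependent operator $\mathcal{P}_{\lambda_n}$), and the key scaling identity $p_1(\xi/\lambda)=\lambda^{-2\sigma}p_\lambda(\xi)$ is correctly identified. However, your choice of normalization creates a genuine gap. You fix $\lambda_n$ so that $\|v_n\|_{L^2}=\|v_n\|_{L^4}=1$, whereas the paper fixes it so that $\|v_n\|_{L^2}\sim\||\nabla|^\sigma v_n\|_{L^2}\sim1$. The paper's choice gives the $H^\sigma$-boundedness of $\{v_n\}$ for free, which is the hypothesis of Proposition~\ref{bubble decomposition}. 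Your choice does not, and the way you try to recover it does not work in the regime $\lambda_n\to\infty$: from $\mathcal{W}(u_n)$ bounded you obtain $q_n:=\|\mathcal{P}_{\lambda_n}^{1/2}v_n\|_{L^2}\lesssim1$ (and in fact $\lambda_n^{1-\sigma}q_n\lesssim1$, so $q_n\to0$), but the pointwise bound $p_{\lambda_n}(\xi)\gtrsim|\xi|^{2\sigma}$ is only valid on $\{|\xi|\gtrsim\lambda_n\}$; on the intermediate range $1\leq|\xi|\leq\lambda_n$ one has $p_{\lambda_n}(\xi)\sim\lambda_n^{2\sigma-2}|\xi|^2$ with $\lambda_n^{2\sigma-2}\to0$, so $q_n\lesssim1$ only yields $\int_{1\leq|\xi|\leq\lambda_n}|\xi|^{2\sigma}|\hat v_n|^2\,d\xi\lesssim\lambda_n^{2-2\sigma}$, which diverges. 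The standard Gagliardo--Nirenberg inequality with $\|v_n\|_{L^2}=\|v_n\|_{L^4}=1$ gives only a \emph{lower} bound $\||\nabla|^\sigma v_n\|_{L^2}\gtrsim1$, not an upper bound. Without $H^\sigma$-boundedness you cannot invoke Proposition~\ref{bubble decomposition} at all, so the rest of the argument (including your otherwise elegant $\phi\neq0$ observation via the $L^4$-normalization) is not available.

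The fix is simply to normalize as the paper does, namely choose $\alpha_n,\mu_n$ with $\|\alpha_nu_n(\mu_n\cdot)\|_{L^2}=\||\nabla|^\sigma(\alpha_nu_n(\mu_n\cdot))\|_{L^2}=1$, set $\lambda_n=\mu_n$ (or $\lambda_n=1$ when $\mu_n$ stays in a compact subset of $(0,\infty)$), and accept that the ruling out of $\phi=0$ is then more delicate: one must show $W_1(u_n)\to\infty$ when $\lambda_n$ stays bounded, and $W_2(u_n)\to\infty$ when $\lambda_n\to\infty$, using frequency-localized lower bounds on $p_{\lambda_n}$ (as in Lemma~\ref{p_k bound}). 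Your description of the cross-term analysis for the third Pythagorean identity is roughly consistent with the paper's, but is vague precisely where the paper does real work in the $\lambda_n\to\infty$ case, namely the decomposition $\phi=\phi_\epsilon+(\phi-\phi_\epsilon)$ with $\hat\phi_\epsilon$ compactly supported away from the origin, the factor $\lambda_n^{2\sigma-2}\to0$ killing the leading quadratic term, and dominated convergence for the Taylor remainder. You would need to spell this out.
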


\begin{proof}
Let $\{u_n\}_{n=1}^\infty\subset H^\sigma$ be a minimizing sequence for the Weinstein functional $\mathcal{W}(u)$. Then, as we did in the proof of Theorem \ref{existence of a static ground state}, for each $n$, we can find $\alpha_n,\mu_n>0$ such that
$$\|\alpha_n u_n(\mu_n\cdot)\|_{L^2}=\||\nabla|^\sigma(\alpha_n u_n(\mu_n\cdot))\|_{L^2}=1.$$
Since $\mathcal{W}(\alpha u)=\mathcal{W}(u)$, replacing $u_n$ by $\alpha_n u_n$, we may assume that $\alpha_n=1$. Moreover, passing to a subsequence, we may assume that $\mu_n\to \mu_*\in[0,+\infty)$ or $\mu_n\to+\infty$. If $\mu_*\in(0,+\infty)$, we set $v_n=u_n$ and $\lambda_n=1$. Otherwise, we set $v_n=u_n(\lambda_n\cdot)$ with $\lambda_n=\mu_n$. Then, $v_n=u_n(\lambda_n\cdot)$ satisfies
\begin{equation}\label{normalization}
\|v_n\|_{L^2}\sim\||\nabla|^\sigma v_n\|_{L^2}\sim1,\textup{ and }\lambda_n=1,\ \lambda_n\to 0\textup{ or }\lambda_n\to+\infty.
\end{equation}
Thus, by Proposition \ref{bubble decomposition}, passing to a subsequence, we can write $v_n=\phi(\cdot-x_n)+R_n$. Moreover, by the translation invariance of $\mathcal{W}(u)$, i.e., $\mathcal{W}(u(\cdot-a))=\mathcal{W}(u)$, replacing $u_n$  by $u_n(\cdot+x_n)$, we obtain a simpler two-bubble decomposition
$$u_n(\lambda_n\cdot)=v_n=\phi+R_n$$
satisfying \eqref{Properties of two profiles} except the third identity.

To show that there is a two-bubble decomposition with $\phi\neq 0$, we assume that there is no such $\phi$ for contradiction, in other word, $v_n=R_n$. Then, by Proposition \ref{bubble decomposition} $(ii)$ with $R_n=R_n^J$, we have $\|v_n\|_{L^4}\to 0$ up to a subsequence. When $\lambda_n=1$ or $\lambda_n\to0$, we deduce a contradiction to the minimality of $\{u_n\}_{n=1}^\infty$ by showing $W_1(u_n)\to\infty$. Indeed, by scaling and \eqref{normalization}, we have
$$W_1(u_n)=\frac{\|v_n\|_{L^2}^{\frac{4\sigma-1}{\sigma}}\|(\mathcal{P}_{\lambda_n})^{1/2}v_n\|_{L^2}^{\frac{1}{\sigma}}}{\|v_n\|_{L^4}^4}\sim\frac{\|(\mathcal{P}_{\lambda_n})^{1/2}v_n\|_{L^2}^{\frac{1}{\sigma}}}{\|v_n\|_{L^4}^4}.$$
Suppose that $\lambda_n=1$. Then, by modifying the proof of Lemma \ref{p_k bound} $(ii)$, one can show that $p_1(\xi)\sim_c|\xi|^{2\sigma}$ for $|\xi|\geq c$. Thus, for sufficiently large $n$,
\begin{align*}
\|(\mathcal{P}_1)^{1/2}v_n\|_{L^2}^2&\geq\int_{|\xi|\geq c} p_1(\xi)|\hat{v}_n(\xi)|^2 d\xi\\
&\sim_c\int_{|\xi|\geq c}|\xi|^{2\sigma}|\hat{v}_n(\xi)|^2 d\xi=\||\nabla|^\sigma v_n\|_{L^2}^2-\int_{|\xi|\leq c}|\xi|^{2\sigma}|\hat{v}_n(\xi)|^2 d\xi\\
&\geq \||\nabla|^\sigma v_n\|_{L^2}^2-c^{2\sigma}\|v_n\|_{L^2}^2\geq\delta>0,
\end{align*}
provided that $c>0$ is small enough. Similarly, if $\lambda_n\to0$, by Lemma \ref{p_k bound} $(ii)$, 
\begin{align*}
\|(\mathcal{P}_{\lambda_n})^{1/2}v_n\|_{L^2}&\geq\int_{|\xi|\geq c\lambda_n } p_{\lambda_n}(\xi)|\hat{v}_n(\xi)|^2 d\xi\sim_c\int_{|\xi|\geq c\lambda_n}|\xi|^{2\sigma}|\hat{v}_n(\xi)|^2 d\xi\\
&\geq\int_{|\xi|\geq c}|\xi|^{2\sigma}|\hat{v}_n(\xi)|^2 d\xi=\||\nabla|^\sigma v_n\|_{L^2}^2-\int_{|\xi|\leq c}|\xi|^{2\sigma}|\hat{v}_n(\xi)|^2 d\xi\\
&\geq \||\nabla|^\sigma v_n\|_{L^2}^2-c^{2\sigma}\|v_n\|_{L^2}^2\geq\delta>0.
\end{align*}
In both cases, we obtain
$$W_1(u_n)\gtrsim\frac{1}{\|v_n\|_{L^4}^4}\to \infty.$$
Suppose that $\lambda_n\to\infty$. Then by scaling and \eqref{normalization}, we write
$$W_2(u_n)=\frac{\lambda_n^{1-\sigma}\|v_n\|_{L^2}^3\|(\mathcal{P}_{\lambda_n})^{1/2}v_n\|_{L^2}}{\|v_n\|_{L^4}^4}\sim\frac{\lambda_n^{1-\sigma}\|(\mathcal{P}_{\lambda_n})^{1/2}v_n\|_{L^2}}{\|v_n\|_{L^4}^4}.$$
We claim that if $|\xi|\geq c$, then
$$p_{\lambda_n}(\xi)\gtrsim_c\lambda_n^{2\sigma-2}|\xi|^{2\sigma}.$$
Indeed, if $|\xi|\geq\frac{\lambda_n}{2}$, then it follows from Lemma \ref{p_k bound} $(ii)$ that $|p_{\lambda_n}(\xi)|\sim|\xi|^{2\sigma}$. If $c\leq|\xi|\leq\frac{\lambda_n}{2}$, then using the second order Taylor expansion, one can show that $p_{\lambda_n}(\xi)=\lambda_n^{2\sigma}(|\frac{\xi}{\lambda_n}+1|^{2\sigma}-1-2\sigma\frac{\xi}{\lambda_n})\sim \lambda_n^{2\sigma}(\frac{\xi}{\lambda_n})^2=\lambda_n^{2\sigma-2}|\xi|^2\gtrsim_c\lambda_n^{2\sigma-2}|\xi|^{2\sigma}$. Therefore, by the claim, we prove that
\begin{align*}
W_2(u_n)&\gtrsim_c\frac{1}{\|v_n\|_{L^4}^4}\Big\{\int_{|\xi|\geq c}|\xi|^{2\sigma}|\hat{v}_n(\xi)|^2d\xi\Big\}^{1/2}\\
&=\frac{1}{\|v_n\|_{L^4}^4}\Big\{\||\nabla|^\sigma v_n\|_{L^2}^2-\int_{|\xi|\leq c}|\xi|^{2\sigma}|\hat{v}_n(\xi)|^2d\xi\Big\}^{1/2}\\
&\geq\frac{\big\{\||\nabla|^\sigma v_n\|_{L^2}^2-c^{2\sigma}\|v_n\|_{L^2}^2\big\}^{1/2}}{\|v_n\|_{L^4}^4}\geq\frac{\delta}{\|v_n\|_{L^4}^4}\to\infty.
\end{align*}

For the third identity in \eqref{Properties of two profiles}, expanding its left hand side, we see that it suffices to show that the cross term $\la\mathcal{P}_{\lambda_n} \phi, R_n\ra_{L^2}$ converges to zero. It is obvious when $\lambda_n=1$. Suppose that $\lambda_n\to0$. Then, since $\|R_n\|_{H^\sigma}$ is bounded and $R_n\rightharpoonup 0$ in $H^\sigma$, it is enough to show that $\la\nabla\ra^{-\sigma}\mathcal{P}_{\lambda_n}\phi$ converges in $L^2$. Indeed, by Lemma \ref{p_k bound} $(iii)$, we have a point-wise bound $|\la\xi\ra^{-\sigma}p_{\lambda_n}(\xi)|\lesssim\la\xi\ra^\sigma$, where the implicit constant is independent on $n$. Therefore, it follows from the dominated convergence theorem that $\la\nabla\ra^{-\sigma}\mathcal{P}_{\lambda_n}\phi\to\phi$ in $L^2$. Suppose that $\lambda_n\to \infty$. Then, given $\epsilon>0$, there exists $\phi_\epsilon\in H^\sigma$ such that $\|\phi-\phi_\epsilon\|_{H^\sigma}\leq\epsilon$ and $\hat{\phi}_\epsilon$ is supported in $\{c\leq |\xi|\leq C\}$. Thus, by Lemma \ref{p_k bound} $(iii)$, we have
$$\la\mathcal{P}_{\lambda_n}(\phi-\phi_\epsilon), R_n\ra_{L^2}\leq\|\la\nabla\ra^{-\sigma}(\mathcal{P}_{\lambda_n})^{1/2}(\phi-\phi_\epsilon)\|_{L^2}\|R_n\|_{H^\sigma}\lesssim \|\phi-\phi_\epsilon\|_{H^\sigma}\leq \epsilon.$$
On the other hand, we have
\begin{align*}
\la\mathcal{P}_{\lambda_n} \phi_\epsilon, R_n\ra_{L^2}&=\la p_{\lambda_n}\hat{\phi}_\epsilon, \hat{R}_n\ra_{L^2}=\lambda_n^{2\sigma}\la (|1+\tfrac{\xi}{\lambda_n}|^{2\sigma}-1-2\sigma\tfrac{\xi}{\lambda_n})\hat{\phi}_\epsilon, \hat{R}_n\ra_{L^2}\\
&=\lambda_n^{2\sigma-2}\sigma(2\sigma-1)\la-\Delta \phi_\epsilon, R_n\ra_{L^2}\\
&\quad+\lambda_n^{2\sigma-2}\big\la\big(|1+\tfrac{\xi}{\lambda_n}|^{2\sigma}-1-2\sigma\tfrac{\xi}{\lambda_n}-\sigma(2\sigma-1)\tfrac{\xi^2}{\lambda_n^2}\big)\hat{\phi}_\epsilon, \hat{R}_n\big\ra_{L^2}\\
&=I_n+II_n.
\end{align*}
It is easy to see that $I_n$ converges to zero, since  $\la-\Delta \phi_\epsilon, R_n\ra_{L^2}\leq \|\Delta\phi_\epsilon\|_{L^2}\|R_n\|_{L^2}\lesssim_\epsilon1$ and $\lambda_n^{2\sigma-2}\to 0$. Moreover, using the dominated convergence theorem as above, one can show that $II_n$ converges to zero. Since $\epsilon>0$ is arbitrary, we conclude that $\la\mathcal{P}_{\lambda_n} \phi, R_n\ra_{L^2}\to 0$.
\end{proof}

\subsection{Concentration}
For contradiction, we assume that $u_n=v_n(\frac{\cdot}{\lambda_n})$ with $\lambda_n\to0$. Then, we prove that 
\begin{equation}\label{lambda to zero}
\mathcal{W}(u_n)\to\frac{\|Q_{(\sigma)}\|_{L^2}^{\frac{4\sigma-1}{\sigma}}\||\nabla|^\sigma Q_{(\sigma)}\|_{L^2}^{\frac{1}{\sigma}}}{\|Q_{(\sigma)}\|_{L^4}^4},
\end{equation}
where $Q_{(\sigma)}$ is the ground state for the elliptic equation
$$(-\Delta)^\sigma Q_{(\sigma)}+Q_{(\sigma)}-Q_{(\sigma)}^3=0.$$
Indeed, by scaling,
\begin{equation}\label{scaled W}
\begin{aligned}
\mathcal{W}_1(u_n)&=\frac{\|v_n\|_{L^2}^{\frac{4\sigma-1}{\sigma}}\|(\mathcal{P}_{\lambda_n})^{1/2}v_n\|_{L^2}^{\frac{1}{\sigma}}}{\|v_n\|_{L^4}^4}+\alpha\lambda_n^{2-2\sigma}\frac{\|v_n\|_{L^2}^{\frac{4\sigma-1}{\sigma}}\|(\mathcal{P}_{\lambda_n})^{1/2}v_n\|_{L^2}^{\frac{1}{\sigma}}}{\|v_n\|_{L^4}^4}\\
&-\Big(\frac{\|v_n\|_{L^2}^{\frac{4\sigma-1}{\sigma}}\|(\mathcal{P}_{\lambda_n})^{1/2}v_n\|_{L^2}^{\frac{1}{\sigma}}}{\|v_n\|_{L^4}^4}\Big)^{1-\theta}\Big(\alpha\lambda_n^{2-2\sigma}\frac{\|v_n\|_{L^2}^{\frac{4\sigma-1}{\sigma}}\|(\mathcal{P}_{\lambda_n})^{1/2}v_n\|_{L^2}^{\frac{1}{\sigma}}}{\|v_n\|_{L^4}^4}\Big)^\theta.
\end{aligned}
\end{equation}
We claim that
$$\|(\mathcal{P}_{\lambda_n})^{1/2}v_n\|_{L^2}^2=\||\nabla|^\sigma v_n\|_{L^2}^2+o_n(1).$$
To show the claim, we write 
\begin{align*}
&\|(\mathcal{P}_{\lambda_n})^{1/2}v_n\|_{L^2}^2-\||\nabla|^\sigma v_n\|_{L^2}^2\\
&=\int_{|\xi|\leq C\lambda_n}+ \int_{|\xi|\geq C\lambda_n} \big(|\xi+\lambda_n|^{2\sigma}-\lambda_n^{2\sigma}-2\sigma\lambda_n^{2\sigma-1}\xi-|\xi|^{2\sigma}\big)|\hat{v}_n(\xi)|^2d\xi\\
&=I_n+II_n.
\end{align*}
Using Taylor series, one can show that for any $\epsilon>0$, there exists $C>0$ such that 
\begin{align*}
II_n&=\int_{\frac{\lambda_n}{|\xi|}\leq \frac{1}{C}} |\xi|^{2\sigma}\big(|1+\tfrac{\lambda_n}{|\xi|}|^{2\sigma}-(\tfrac{\lambda_n}{|\xi|})^{2\sigma}-2\sigma(\tfrac{\lambda_n}{|\xi|})^{2\sigma-1}-1\big)|\hat{v}_n(\xi)|^2d\xi\\
&\lesssim\frac{1}{C^{2\sigma-1}}\||\nabla|^\sigma v_n\|_{L^2}^2\lesssim \frac{1}{C^{2\sigma-1}}\lesssim\epsilon.
\end{align*}
Moreover, given $C>0$, sending $n\to \infty$, we get
$$I_n\lesssim (C\lambda_n)^{2\sigma}\|v_n\|_{L^2}^2\sim (C\lambda_n)^{2\sigma}\to 0.$$
Going back to \eqref{scaled W}, by the claim with $\lambda_n\to 0$, we obtain
$$\mathcal{W}_1(u_n)=\frac{\|v_n\|_{L^2}^{\frac{4\sigma-1}{\sigma}}\||\nabla|^\sigma v_n\|_{L^2}^{\frac{1}{\sigma}}}{\|v_n\|_{L^4}^4}+o_n(1).$$
Then, \eqref{lambda to zero} follows from the minimization property of $Q_{(\sigma)}$, i.e.,
$$\frac{\|u\|_{L^2}^{\frac{4\sigma-1}{\sigma}}\||\nabla|^\sigma u\|_{L^2}^{\frac{1}{\sigma}}}{\|u\|_{L^4}^4}\geq\frac{\|Q_{(\sigma)}\|_{L^2}^{\frac{4\sigma-1}{\sigma}}\||\nabla|^\sigma Q_{(\sigma)}\|_{L^2}^{\frac{1}{\sigma}}}{\|Q_{(\sigma)}\|_{L^4}^4}$$
(see Theorem \ref{existence of a static ground state}).

Next, we deduce a contradiction by showing that
$$\mathcal{W}(Q_{(\sigma)})<\frac{\|Q_{(\sigma)}\|_{L^2}^{\frac{4\sigma-1}{\sigma}}\||\nabla|^\sigma Q_{(\sigma)}\|_{L^2}^{\frac{1}{\sigma}}}{\|Q_{(\sigma)}\|_{L^4}^4}.$$
To this end, we claim that there exists $c\in(0,1)$ such that
\begin{equation}\label{claim for w}
\|\mathcal{P}_1^{1/2}Q_{(\sigma)}\|_{L^2}^2= c\||\nabla|^\sigma Q_{(\sigma)}\|_{L^2}^2,
\end{equation}
where $c$ may depend on $Q_{(\sigma)}$. Indeed, by Plancherel theorem and the symmetry of $Q_{(\sigma)}$, we write
\begin{equation}\label{Plancherel for Q}
\|\mathcal{P}_1^{1/2}Q_{(\sigma)}\|_{L^2}^2=\int_{\mathbb{R}} p_1(\xi)|\hat{Q}_{(\sigma)}(\xi)|^2d\xi=\int_{\mathbb{R}}g(\xi)|\hat{Q}_{(\sigma)}(\xi)|^2d\xi,
\end{equation}
where $g(\xi):=\frac{1}{2}(p_1(\xi)+p_1(-\xi))$ is an even function. Thus, it suffices to show that $g(\xi)<|\xi|^{2\sigma}$. Indeed, if $0<\xi<1$, then using Taylor series, we write
\begin{align*}
g(\xi)&=\frac{1}{2}\Big\{|\xi+1|^{2\sigma}+|\xi-1|^{2\sigma}\Big\}-1\\
&=\Big\{1+\tfrac{2\sigma(2\sigma-1)}{2!}\xi^2+\tfrac{2\sigma(2\sigma-1)(2\sigma-2)(2\sigma-3)}{4!}\xi^4\\
&\quad\quad\quad\quad+\tfrac{2\sigma(2\sigma-1)(2\sigma-2)(2\sigma-3)(2\sigma-4)(2\sigma-5)}{6!}\xi^6+\cdots\Big\}-1\\
&=|\xi|^{2\sigma}\Big\{\tfrac{2\sigma(2\sigma-1)}{2!}\xi^{2-2\sigma}+\tfrac{2\sigma(2\sigma-1)(2\sigma-2)(2\sigma-3)}{4!}\xi^{4-2\sigma} \\
&\quad\quad\quad\quad+\tfrac{2\sigma(2\sigma-1)(2\sigma-2)(2\sigma-3)(2\sigma-4)(2\sigma-5)}{6!}\xi^{6-2\sigma}+\cdots\Big\}.
\end{align*}
We observe that the power series $\{\cdots\}(\xi)$ is absolutely convergent and it is increasing on the interval $(0,1)$. Moreover, we have
\begin{equation}\label{elementary estimate for sigma}
0<2^{2\sigma-1}-1<1,
\end{equation}
since $2^{2\cdot\frac{1}{2}-1}-1=0$, $2^{2\cdot1-1}-1=1$ and $\frac{d}{d\sigma}(2^{2\sigma-1}-1)=2^{2\sigma}\ln 2>0$. Therefore, we obtain 
$$g(\xi)=\{\cdots\}(\xi)|\xi|^{2\sigma}\leq \{\cdots\}(1)|\xi|^{2\sigma}=g(1)|\xi|^{2\sigma}=(2^{2\sigma-1}-1)|\xi|^{2\sigma}<|\xi|^{2\sigma}.$$
On the other hand, if $\xi>1$, we write
\begin{align*}
g(\xi)&=\frac{1}{2}|\xi|^{2\sigma}\Big\{|1+\tfrac{1}{\xi}|^{2\sigma}+|1-\tfrac{1}{\xi}|^{2\sigma}\Big\}-1\\
&=|\xi|^{2\sigma}-1+\Big\{\tfrac{2\sigma(2\sigma-1)}{2!}\tfrac{1}{\xi^2}+\tfrac{2\sigma(2\sigma-1)(2\sigma-2)(2\sigma-3)}{4!}\tfrac{1}{\xi^4}\\
&\quad\quad\quad\quad\quad\quad\quad\quad+\tfrac{2\sigma(2\sigma-1)(2\sigma-2)(2\sigma-3)(2\sigma-4)(2\sigma-5)}{6!}\tfrac{1}{\xi^6}+\cdots\Big\}.
\end{align*}
Then, the Laurent series $\{\cdots\}(\xi)$ is absolutely convergent, and it is decreasing on $(1,+\infty)$. Thus, we have
\begin{align*}
g(\xi)&=|\xi|^{2\sigma}-1+\{\cdots\}(\xi)\\
&\leq|\xi|^{2\sigma}-1+\{\cdots\}(1)=|\xi|^{2\sigma}-1+g(1)\\
&=|\xi|^{2\sigma}-1+(2^{2\sigma-1}-1)=|\xi|^{2\sigma}-(2-2^{2\sigma-1})\\
&<|\xi|^{2\sigma}\quad\textup{(by \eqref{elementary estimate for sigma})}.
\end{align*}
By the claim \eqref{claim for w}, we obtain
\begin{align*}
W_1(Q_{(\sigma)})= c^{1/2\sigma}\frac{\|Q_{(\sigma)}\|_{L^2}^{\frac{4\sigma-1}{\sigma}}\||\nabla|^\sigma Q_{(\sigma)}\|_{L^2}^{\frac{1}{\sigma}}}{\|Q_{(\sigma)}\|_{L^4}^4}.
\end{align*}
Moreover, by \eqref{claim for w} and the Pohozaev identities (Lemma \ref{Pohozaev}), 
\begin{align*}
W_2(Q_{(\sigma)})&= c^{1/2} \alpha\frac{\|Q_{(\sigma)}\|_{L^2}^3\||\nabla|^\sigma Q_{(\sigma)}\|_{L^2}}{\|Q_{(\sigma)}\|_{L^4}^4}\\
&= c^{1/2} (4\sigma-1)^{\frac{1-\sigma}{2\sigma}} \alpha \frac{\|Q_{(\sigma)}\|_{L^2}^{\frac{4\sigma-1}{\sigma}}\||\nabla|^\sigma Q_{(\sigma)}\|_{L^2}^{\frac{1}{\sigma}}}{\|Q_{(\sigma)}\|_{L^4}^4},
\end{align*}
where $\alpha=\alpha(\sigma)=1/\sqrt{\sigma(2\sigma-1)}$. Since $W_3(u)=W_1(u)^{1-\theta}W_2(u)^\theta$, collecting all, we get
$$\mathcal{W}(Q_{(\sigma)})=h(\theta)\cdot\frac{\|Q_{(\sigma)}\|_{L^2}^{\frac{4\sigma-1}{\sigma}}\||\nabla|^\sigma Q_{(\sigma)}\|_{L^2}^{\frac{1}{\sigma}}}{\|Q_{(\sigma)}\|_{L^4}^4},$$
where
$$h(\theta)=\Big(c^{1/2\sigma}+c^{1/2} (4\sigma-1)^{\frac{1-\sigma}{2\sigma}}\alpha-\big(c^{1/2\sigma}\big)^{1-\theta}\big(c^{1/2} (4\sigma-1)^{\frac{1-\sigma}{2\sigma}}\alpha\big)^\theta\Big).$$
Observe that $h(1)=c^{1/2\sigma}<1$. Hence, by continuity of $h(\theta)$, there exists $\theta\in(0,1)$, depending on $\sigma$, such that $h(\theta)<1$. This contradicts the \eqref{lambda to zero}.

\subsection{Dichotomy}
Now, we show that a dichotomy does not occur, in other words, $R_n\to0$ in $H^\sigma$. For contradiction, we assume that $R_n$ is not negligible in $H^\sigma$ as $n\to\infty$ so that by \eqref{Properties of two profiles}, either
\begin{equation}\label{phi1}
0<\lim_{n\to\infty}\frac{\|\phi\|_{L^2}^2}{\|v_n\|_{L^2}^2}=1-\lim_{n\to\infty}\frac{\|R_n\|_{L^2}^2}{\|v_n\|_{L^2}^2}<1
\end{equation}
or
\begin{equation}\label{phi2}
0<\lim_{n\to\infty} \frac{\|\mathcal{P}_{\lambda_n}^{1/2}\phi\|_{L^2}^2}{\|\mathcal{P}_{\lambda_n}^{1/2}v_n\|_{L^2}^2}=1-\lim_{n\to\infty}\frac{\|\mathcal{P}_{\lambda_n}^{1/2}R_n\|_{L^2}^2}{\|\mathcal{P}_{\lambda_n}^{1/2}v_n\|_{L^2}^2}<1
\end{equation}
holds up to a subsequence. Suppose that 
\begin{equation}\label{Step 2 assumption}
\lim_{n\to\infty}\frac{\|\phi\|_{L^2}^2}{\|v_n\|_{L^2}^2}\leq\lim_{n\to\infty}\frac{\|\mathcal{P}_{\lambda_n}^{1/2}\phi\|_{L^2}^2}{\|\mathcal{P}_{\lambda_n}^{1/2}v_n\|_{L^2}^2}.
\end{equation}
Define $f_n(t)=\mathcal{W}(t\phi(\frac{\cdot}{\lambda_n})+R_n(\frac{\cdot}{\lambda_n}))$. We will show that $f_n'(1)\geq\delta>0$ for sufficiently large $n$. Then, there exists $t\in(0,1)$ such that $\mathcal{W}(t\phi(\frac{\cdot}{\lambda_n})+R_n(\frac{\cdot}{\lambda_n}))\leq\frac{1}{c_{GN}}-\delta(1-t)+o_n(1)$, which contradicts the minimality of $\{u_n\}_{n=1}^\infty$. If \eqref{Step 2 assumption} does not hold, by \eqref{phi1} and \eqref{phi2}, we have
$$\lim_{n\to\infty}\frac{\|R_n\|_{L^2}^2}{\|v_n\|_{L^2}^2}\leq\lim_{n\to\infty}\frac{\|\mathcal{P}_{\lambda_n}^{1/2}R_n\|_{L^2}^2}{\|\mathcal{P}_{\lambda_n}^{1/2}v_n\|_{L^2}^2}.$$
Then, switching the roles of $\phi$ and $R_n$ in the definition of $f_n(t)$, i.e., $\mathcal{W}(\phi(\frac{\cdot}{\lambda_n})+tR_n(\frac{\cdot}{\lambda_n}))$, we can deduce a contradiction by the same argument. 

We compute $f_n'(1)$ using asymptotic orthogonality \eqref{Properties of two profiles} as in \eqref{static derivative calc}:
\begin{align*}
f_n'(1)&=W_1(u_n)\Big\{\tfrac{4\sigma-1}{\sigma}\tfrac{\|\phi\|_{L^2}^2}{\|v_n\|_{L^2}^2}+\tfrac{1}{\sigma}\tfrac{\|\mathcal{P}_{\lambda_n}^{1/2}\phi\|_{L^2}^2}{\|\mathcal{P}_{\lambda_n}^{1/2}v_n\|_{L^2}^2}-4\tfrac{\|\phi\|_{L^4}^4}{\|v_n\|_{L^4}^4}\Big\}\\
&+W_2(u_n)\Big\{3\tfrac{\|\phi\|_{L^2}^2}{\|v_n\|_{L^2}^2}+\tfrac{\|\mathcal{P}_{\lambda_n}^{1/2}\phi\|_{L^2}^2}{\|\mathcal{P}_{\lambda_n}^{1/2}v_n\|_{L^2}^2}-4\tfrac{\|\phi\|_{L^4}^4}{\|v_n\|_{L^4}^4}\Big\}\\
&-W_3(u_n)\Big\{\big(\tfrac{(4\sigma-1)(1-\theta)}{\sigma}+3\theta\big)\tfrac{\|\phi\|_{L^2}^2}{\|v_n\|_{L^2}^2} +\big(\tfrac{1-\theta}{\sigma}+\theta\big)\tfrac{\|\mathcal{P}_{\lambda_n}^{1/2}\phi\|_{L^2}^2}{\|\mathcal{P}_{\lambda_n}^{1/2}v_n\|_{L^2}^2}-4\tfrac{\|\phi\|_{L^4}^4}{\|v_n\|_{L^4}^4}\Big\}+o_n(1).
\end{align*}
By the trivial estimate $W_3=W_1^{1-\theta}W_2^\theta\leq(1-\theta)W_1+\theta W_2$,
\begin{align*}
f_n'(1)&\geq\theta W_1\Big\{(\tfrac{5\sigma-2}{\sigma}+\tfrac{1-\sigma}{\sigma}\theta)\tfrac{\|\phi\|_{L^2}^2}{\|v_n\|_{L^2}^2}+(\tfrac{2-\sigma}{\sigma}-\tfrac{1-\sigma}{\sigma}\theta)\tfrac{\|\mathcal{P}_{\lambda_n}^{1/2}\phi\|_{L^2}^2}{\|\mathcal{P}_{\lambda_n}^{1/2}v_n\|_{L^2}^2}-4\tfrac{\|\phi\|_{L^4}^4}{\|v_n\|_{L^4}^4}\Big\}\\
&+(1-\theta)W_2\Big\{(3+\tfrac{1-\sigma}{\sigma}\theta)\tfrac{\|\phi\|_{L^2}^2}{\|v_n\|_{L^2}^2}+(1-\tfrac{1-\sigma}{\sigma}\theta)\tfrac{\|\mathcal{P}_{\lambda_n}^{1/2}\phi\|_{L^2}^2}{\|\mathcal{P}_{\lambda_n}^{1/2}v_n\|_{L^2}^2}-4\tfrac{\|\phi\|_{L^4}^4}{\|v_n\|_{L^4}^4}\Big\}+o_n(1).
\end{align*}
For notational convenience, let
$$A=\tfrac{\|\phi\|_{L^2}^2}{\|v_n\|_{L^2}^2},\ B=\tfrac{\|\mathcal{P}_{\lambda_n}^{1/2}\phi\|_{L^2}^2}{\|\mathcal{P}_{\lambda_n}^{1/2}v_n\|_{L^2}^2},\ C=\tfrac{\|\phi\|_{L^4}^4}{\|v_n\|_{L^4}^4},\ W_1=W_1(u_n),\ W_2=W_2(u_n),\ W_3=W_3(u_n).$$
Choosing $\theta<1$ sufficiently close to $1$, we write
$$f_n'(1)\geq\theta W_1\Big\{\tfrac{4\sigma-1}{\sigma}^-A+\tfrac{1}{\sigma}^+B-4C\Big\}+(1-\theta)W_2\Big\{\tfrac{1+2\sigma}{\sigma}^-A+\tfrac{2\sigma-1}{\sigma}^+B-4C\Big\}+o_n(1),$$
where $a^+$ is a preselected number that is greater than $a$ but sufficiently close to $a$ depending only on $\theta$, and similarly for $b^-$. Here, $\tfrac{4\sigma-1}{\sigma}^-+\tfrac{1}{\sigma}^+=4$ and $\tfrac{1+2\sigma}{\sigma}^-+\tfrac{2\sigma-1}{\sigma}^+=4$.

We will estimate $C=\tfrac{\|\phi\|_{L^4}^4}{\|v_n\|_{L^4}^4}$ using the following lemma.
\begin{lemma}\label{elementary lemma}
For $\theta\in(0,1)$, let $\ell(s)=1+s-s^\theta$.\\
$(i)$ If $\theta^{\frac{1}{1-\theta}}\leq s_1\leq s_2$, then $\ell(s_1)\leq \ell(s_2)$.\\
$(ii)$ If $0\leq s_1\leq\theta^{\frac{1}{1-\theta}}$ and $s_1\leq s_2$, then $\ell(s_1)\leq (1-\theta^{\frac{\theta}{\theta-1}}(1-\theta))^{-1}\ell(s_2)$.
\end{lemma}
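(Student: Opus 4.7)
The plan is a direct one-variable calculus analysis of $\ell(s) = 1 + s - s^{\theta}$ on $[0,\infty)$ for $\theta \in (0,1)$. Differentiating gives $\ell'(s) = 1 - \theta s^{\theta-1}$, and since $\theta - 1 < 0$, the function $s \mapsto s^{\theta-1}$ is strictly decreasing on $(0,\infty)$; hence $\ell'$ has a unique zero at
$$s_* = \theta^{\frac{1}{1-\theta}},$$
with $\ell$ strictly decreasing on $(0, s_*]$ and strictly increasing on $[s_*, \infty)$. In particular $s_*$ realizes the global minimum of $\ell$ on $[0,\infty)$.

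Claim (i) will then follow at once: under the hypothesis $\theta^{1/(1-\theta)} = s_* \leq s_1 \leq s_2$, both points lie in the region of monotone increase, so $\ell(s_1) \leq \ell(s_2)$.

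For claim (ii), I would first evaluate the minimum value. From $s_*^{1-\theta} = \theta$ one gets $s_* = \theta \cdot s_*^\theta$, whence
$$\ell(s_*) \;=\; 1 + s_* - s_*^\theta \;=\; 1 - (1-\theta)\, s_*^{\theta} \;=\; 1 - (1-\theta)\,\theta^{\frac{\theta}{1-\theta}},$$
and this is strictly positive because $\theta \in (0,1)$ forces both $1-\theta < 1$ and $\theta^{\theta/(1-\theta)} < 1$. If $s_1 \in [0, s_*]$, the monotone decrease of $\ell$ on that interval gives $\ell(s_1) \leq \ell(0) = 1$, while for any $s_2 \geq s_1$ the global minimum property gives $\ell(s_2) \geq \ell(s_*)$. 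Combining the two bounds,
$$\ell(s_1) \;\leq\; 1 \;=\; \frac{\ell(s_*)}{\ell(s_*)} \;\leq\; \frac{1}{\ell(s_*)}\,\ell(s_2),$$
which is the desired estimate with constant $\ell(s_*)^{-1}$. I do not anticipate any real obstacle: the only item of care is verifying $\ell(s_*) > 0$ so that the reciprocal is well-defined, and this is immediate from the elementary estimate above. (As a minor point, my calculation produces the constant $(1-(1-\theta)\theta^{\theta/(1-\theta)})^{-1}$; the sign of the exponent in the statement as written appears to be a typographical slip, since the opposite exponent causes the denominator to vanish at $\theta = 1/2$.)
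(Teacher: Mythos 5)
Your proof is correct and follows the same one-variable calculus route as the paper, which simply observes that $\ell$ is convex on $[0,\infty)$ with $\ell(0)=1$, a unique critical point at $\theta^{\frac{1}{1-\theta}}$, and global minimum there, and then concludes. You are also right that the exponent in the stated constant is a typographical slip: the minimum value is $\ell\big(\theta^{\frac{1}{1-\theta}}\big)=1-(1-\theta)\theta^{\frac{\theta}{1-\theta}}$, so the exponent should be $\frac{\theta}{1-\theta}$, not $\frac{\theta}{\theta-1}$ (the latter makes the denominator vanish at $\theta=\tfrac12$, as you noted).
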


\begin{proof}
Observe that $\ell$ is convex on $[0,+\infty)$, $\ell(0)=1$ and $\ell(s)$ has a critical number $\theta^{\frac{1}{1-\theta}}$, where $\ell$ has an absolute minimum $\ell(\theta^{\frac{1}{1-\theta}})=1-\theta^{\frac{\theta}{\theta-1}}(1-\theta)$.  The lemma then follows.
\end{proof}

\begin{remark}
$\theta^{\frac{1}{1-\theta}}\to1/e$ and $(1-\theta^{\frac{\theta}{\theta-1}}(1-\theta))^{-1}\to 1$ as $\theta\to 1-$.
\end{remark}

Let
\begin{align*}
x&=\|\phi\|_{L^2}^{\frac{4\sigma-1}{\sigma}}\|\mathcal{P}_{\lambda_n}^{1/2} \phi\|_{L^2}^{\frac{1}{\sigma}},&& y= \alpha\|\phi\|_{L^2}^3\|\mathcal{P}_{\lambda_n}^{1/2} \phi\|_{L^2},\\
X&=\|v_n\|_{L^2}^{\frac{4\sigma-1}{\sigma}}\|\mathcal{P}_{\lambda_n}^{1/2} v_n\|_{L^2}^{\frac{1}{\sigma}},&& Y=\alpha\|v_n\|_{L^2}^3\|\mathcal{P}_{\lambda_n}^{1/2} v_n\|_{L^2}.
\end{align*}
By minimality of $\{u_n\}_{n=1}^\infty$ and scaling, we have
\begin{align*}
\frac{1}{c_{GN}}&=\mathcal{W}(u_n)+o_n(1)\\
&=W_1(v_n(\tfrac{\cdot}{\lambda_n}))+W_2(v_n(\tfrac{\cdot}{\lambda_n}))-W_1(v_n(\tfrac{\cdot}{\lambda_n}))^{1-\theta}W_2(v_n(\tfrac{\cdot}{\lambda_n}))^\theta+o_n(1)\\
&=\cdots=\frac{X+(\lambda_n^{1-\sigma}Y)-X^{1-\theta}(\lambda_n^{1-\sigma}Y)^\theta}{\|v_n\|_{L^4}^4}+o_n(1)
\end{align*}
and similarly, 
$$\frac{1}{c_{GN}}\leq \mathcal{W}(\phi(\tfrac{\cdot}{\lambda_n}))=\frac{x+(\lambda_n^{1-\sigma}y)-x^{1-\theta}(\lambda_n^{1-\sigma}y)^\theta}{\|\phi\|_{L^4}^4}.$$
Thus, we obtain 
\begin{align*}
C=\frac{\|\phi\|_{L^4}^4}{\|v_n\|_{L^4}^4}&\leq \frac{x+(\lambda_n^{1-\sigma}y)-x^{1-\theta}(\lambda_n^{1-\sigma}y)^\theta}{X+(\lambda_n^{1-\sigma}Y)-X^{1-\theta}(\lambda_n^{1-\sigma}Y)^\theta}+o_n(1)\\
&=\frac{x}{X}\frac{1+\frac{\lambda_n^{1-\sigma}y}{x}-(\frac{\lambda_n^{1-\sigma}y}{x})^\theta}{1+\frac{\lambda_n^{1-\sigma}Y}{X}-(\frac{\lambda_n^{1-\sigma}Y}{X})^\theta}+o_n(1)=\frac{x}{X}\frac{\ell(\frac{\lambda_n^{1-\sigma}y}{x})}{\ell(\frac{\lambda_n^{1-\sigma}Y}{X})}+o_n(1).
\end{align*}
By the assumption \eqref{Step 2 assumption}, we have $\frac{y}{Y}\leq\frac{x}{X}$ $(\Rightarrow\frac{y}{x}\leq\frac{Y}{X})$.

Suppose that $\lambda_n\to+\infty$ or $\lambda_n=1$ and $\theta^{\frac{1}{1-\theta}}\leq\frac{y}{x}$ (in both cases, $\theta^{\frac{1}{1-\theta}}\leq\frac{\lambda_n^{1-\sigma}y}{x}$ for large $n$). Then, by Lemma \ref{elementary lemma} ($\Rightarrow C\leq\frac{x}{X}=A^{\frac{4\sigma-1}{2\sigma}}B^{\frac{1}{2\sigma}}$) and Young's inequalities, we get
\begin{align*}
f_n'(1)&\geq\theta W_1\Big\{\tfrac{4\sigma-1}{\sigma}^-A+\tfrac{1}{\sigma}^+B-4\Big(\tfrac{4\sigma-1}{4\sigma}^-A^{2^+}+\tfrac{1}{4\sigma}^+B^{2^-}\Big)\Big\}\\
&+(1-\theta)W_2\Big\{\tfrac{1+2\sigma}{\sigma}^-A+\tfrac{2\sigma-1}{\sigma}^+B-4\Big(\tfrac{1+2\sigma}{4\sigma}^-A^{\frac{8\sigma-2}{1+2\sigma}^+}+\tfrac{2\sigma-1}{4\sigma}^+B^{\frac{2}{2\sigma-1}^-}\Big)\Big\}+o_n(1).
\end{align*}
Here, Young's inequalities are applied so that $\tfrac{4\sigma-1}{\sigma}^-=4\cdot \tfrac{4\sigma-1}{4\sigma}^-$, $\tfrac{1}{\sigma}^+=4\cdot \tfrac{1}{4\sigma}^+$, $\tfrac{1+2\sigma}{\sigma}^-=4\cdot \tfrac{1+2\sigma}{4\sigma}^-$ and $\tfrac{2\sigma-1}{\sigma}^+=4\cdot\tfrac{2\sigma-1}{4\sigma}^+$. Note that $\frac{8\sigma-2}{1+2\sigma}^+>1$ and $\frac{2}{2\sigma-1}^->1$, since $\sigma\in(\frac{1}{2},1)$. Therefore, by the assumption \eqref{Step 2 assumption} $(\Rightarrow 0<A<1\textup{ or }0<B<1)$, we conclude that $f_n'(1)>\delta>0$ for sufficiently large $n$.

Suppose that $\lambda_n=1$, $\frac{y}{x}\leq\theta^{\frac{1}{1-\theta}}$ and $\theta$ is sufficiently close to $1$. We claim that $A\leq (\alpha e^+)^{-\frac{2\sigma}{1-\sigma}}3^{2\sigma}$. By the assumption, we have
$$\frac{y}{x}=\tfrac{\alpha\|\phi\|_{L^2}^3\|\mathcal{P}_1^{1/2}\phi\|_{L^2}}{\|\phi\|_{L^2}^{\frac{4\sigma-1}{\sigma}}\|\mathcal{P}_1^{1/2}\phi\|_{L^2}^{\frac{1}{\sigma}}}=\alpha \Big\{\tfrac{\|\phi\|_{L^2}^2}{\|\mathcal{P}_1^{1/2}\phi\|_{L^2}^2}\Big\}^{\frac{1-\sigma}{2\sigma}}\leq \theta^{\frac{1}{1-\theta}}=\tfrac{1}{e^+}.$$
Hence, $A=\tfrac{\|\phi\|_{L^2}^2}{\|v_n\|_{L^2}^2}=\|\phi\|_{L^2}^2\leq (\alpha e^+)^{-\frac{2\sigma}{1-\sigma}}\|\mathcal{P}_1^{1/2}\phi\|_{L^2}^2$. Moreover, we have
\begin{equation}\label{boundedness of P1}
\|\mathcal{P}_1^{1/2}f\|_{L^2}^2\leq 3^{2\sigma}\||\nabla|^\sigma f\|_{L^2}^2,
\end{equation}
equivalently, $p_1(\xi)\leq 3^{2\sigma}|\xi|^{2\sigma}$. Indeed, if $\xi\geq\frac{1}{2}$, then $|\xi+1|\leq |\xi|+1\leq 3|\xi|$, so $p_1(\xi)\leq3^{2\sigma}|\xi|^{2\sigma}-1-2\sigma\xi\leq 3^{2\sigma}|\xi|^{2\sigma}$. If $\xi\leq-\frac{1}{2}$, then $|\xi+1|\leq|\xi|$, so  $p_1(\xi)\leq|\xi|^{2\sigma}-1-2\sigma\xi\leq (1+2\sigma\cdot 2^{2\sigma-1})|\xi|^{2\sigma}\leq3^{2\sigma}|\xi|^{2\sigma}$. If $|\xi|\leq\frac{1}{2}$, then by the mean value theorem, $p_1(\xi)\leq 1+2\sigma\xi+\sigma(2\sigma-1)2^{2-2\sigma}\xi^2-1-2\sigma\xi\leq \sigma(2\sigma-1)|\xi|^{2\sigma}$. Thus, by \eqref{boundedness of P1} with $\|v_n\|_{L^2}^2=\|\nabla v_n\|_{L^2}^2=1$, we prove that 
$$A\leq (\alpha e)^{-\frac{2\sigma}{1-\sigma}}3^{2\sigma}\||\nabla|^\sigma\phi\|_{L^2}^2\leq (\alpha e^+)^{-\frac{2\sigma}{1-\sigma}}3^{2\sigma}\||\nabla|^\sigma v_n\|_{L^2}^2\leq (\alpha e^+)^{-\frac{2\sigma}{1-\sigma}}3^{2\sigma}.$$
Finally, by Lemma \ref{elementary lemma} ($\Rightarrow C\leq\frac{x}{X}=1^+\cdot A^{\frac{4\sigma-1}{2\sigma}}B^{\frac{1}{2\sigma}})$, Young's inequalities as above and the claim, we get
\begin{align*}
f_n'(1)&\geq\theta W_1\Big\{\tfrac{4\sigma-1}{\sigma}^-A+\tfrac{1}{\sigma}^+B-4^+\Big(\tfrac{4\sigma-1}{4\sigma}A^{2}+\tfrac{1}{4\sigma}B^{2}\Big)\Big\}\\
&+(1-\theta)W_2\Big\{\tfrac{1+2\sigma}{\sigma}^-A+\tfrac{2\sigma-1}{\sigma}^+B-4^+\Big(\tfrac{1+2\sigma}{4\sigma}A^{\frac{8\sigma-2}{1+2\sigma}}+\tfrac{2\sigma-1}{4\sigma}B^{\frac{2}{2\sigma-1}}\Big)\Big\}+o_n(1)\\
&\geq\delta>0,
\end{align*}
where $\tfrac{1}{\sigma}^+=4^+\cdot\tfrac{1}{\sigma}$ and $\tfrac{2\sigma-1}{\sigma}^+=4^+\cdot\tfrac{2\sigma-1}{\sigma}$. Here, we used the fact that $0<A<(\alpha e)^{-\frac{2\sigma}{1-\sigma}}3^{2\sigma}<1$.

\subsection{Spreading out} By Section 4.3, $\{\phi(\frac{\cdot}{\lambda_n})\}_{n=1}^\infty$ is also a minimizing sequence. Then it is enough to show that this minimizing sequence is not spread out $(\lambda_n\to+\infty)$, because if $\lambda_n=1$, then $\phi$ minimizes $\mathcal{W}(u)$.

For contradiction, we assume that $\lambda_n\to \infty$ so that $u_n$ is concentrated in low frequencies for large $n$. Since $\mathcal{P}_k\approx\sigma(2\sigma-1)(-\Delta)$ in low frequencies (Lemma A.2 $(i)$), we have
$$W_1(u_n)=\frac{\|\phi(\frac{\cdot}{\lambda_n})\|_{L^2}^{\frac{4\sigma-1}{\sigma}}\|\nabla \phi(\frac{\cdot}{\lambda_n})\|_{L^2}^{\frac{1}{\sigma}}}{\|\phi(\frac{\cdot}{\lambda_n})\|_{L^4}^4}+o_n(1)=\lambda_n^{1-\frac{1}{\sigma}}\frac{\|\phi\|_{L^2}^{\frac{4\sigma-1}{\sigma}}\|\nabla \phi\|_{L^2}^{\frac{1}{\sigma}}}{\|\phi\|_{L^4}^4}+o_n(1)\to0$$
and thus $W_3(u_n)=W_1(u_n)^{1-\theta}W_2(u_n)^\theta\to 0$ as $n\to 0$. Hence, for sufficiently large $n$, by scaling,
$$\mathcal{W}(u_n)=W_2(u_n)+o_n(1)=\frac{\|\phi(\frac{\cdot}{\lambda_n})\|_{L^2}^3\|\nabla \phi(\frac{\cdot}{\lambda_n})\|_{L^2}}{\|\phi(\frac{\cdot}{\lambda_n})\|_{L^4}^4}+o_n(1)=\frac{\|\phi\|_{L^2}^3\|\nabla \phi\|_{L^2}}{\|\phi\|_{L^4}^4}+o_n(1).$$
We note that $\phi$ should be contained in $H^1$ (not only in $H^\sigma$), because if $\|\phi\|_{H^1}=\infty$, then $\mathcal{W}(u_n)\to \infty$, which contradicts the minimality of $\{u_n\}_{n=1}^\infty$. Let $Q_{(1)}$ be the ground state for the nonlinear elliptic equation
$$-\Delta Q_{(1)}+Q_{(1)}-Q_{(1)}^3=0.$$
Then, it follows from the minimization property of $Q_{(1)}$ that
$$\mathcal{W}(u_n)\to\frac{\|Q_{(1)}\|_{L^2}^3\|\nabla Q_{(1)}\|_{L^2}}{\|Q_{(1)}\|_{L^4}^4}.$$
By the interpolation inequality 
$$\||\nabla|^\sigma Q_{(1)}\|_{L^2}^2\leq \|Q_{(1)}\|_{L^2}^{2-2\sigma}\|\nabla Q_{(1)}\|_{L^2}^{2\sigma}$$
and the minimization property of $Q_{(\sigma)}$, we get
$$\mathcal{W}(u_n)\geq\frac{\|Q_{(1)}\|_{L^2}^{\frac{4\sigma-1}{\sigma}}\||\nabla|^\sigma Q_{(1)}\|_{L^2}^{\frac{1}{\sigma}}}{\|Q_{(1)}\|_{L^4}^4}+o_n(1)>\frac{\|Q_{(\sigma)}\|_{L^2}^{\frac{4\sigma-1}{\sigma}}\||\nabla|^\sigma Q_{(\sigma)}\|_{L^2}^{\frac{1}{\sigma}}}{\|Q_{(\sigma)}\|_{L^4}^4}+\delta$$
for some $\delta>0$. However, it makes a contradiction to the minimality of $\{u_n\}_{n=1}^\infty$, since by \eqref{lambda to zero}, if $\lambda_n'\to 0$, then
$$\mathcal{W}(Q_{(\sigma)}(\tfrac{\cdot}{\lambda_n'}))\to\frac{\|Q_{(\sigma)}\|_{L^2}^{\frac{4\sigma-1}{\sigma}}\||\nabla|^\sigma Q_{(\sigma)}\|_{L^2}^{\frac{1}{\sigma}}}{\|Q_{(\sigma)}\|_{L^4}^4}.$$

\subsection{Regularity} The regularity follows from standard arguments. Since $u \in H^1(\mathbb R)$, then $u$ is continuous and even, by Morrey embedding, it is $C^{1/2}_{loc}(\mathbb R)$. Hence one has that 
$$
\mathcal P_1 u \in C^{1/2}(\mathbb R). 
$$
Then the result follows by standard elliptic regularity and bootstrap. 

\appendix

\section{Properties of the Fourier Multiplier operator $\mathcal{P}_k$}

We collect properties of the Fourier multiplier operator $\mathcal{P}_k$ defined by
$$(\mathcal{P}_kf)^\wedge(\xi)=p_k(\xi)\hat{f}(\xi),$$
where
$$p_k(\xi)=|\xi+k|^{2\sigma}-|k|^{2\sigma}-2\sigma|k|^{2\sigma-2}k\cdot\xi.$$

\begin{lemma}[Asymptotics of $p_k$]\label{asymptotics}
Suppose that $\sigma\in(\frac{1}{2},1)$ and $k\in\mathbb{R}$ with $k\neq 0$.\\
$(i)$ (Low frequencies)
$$p_k(\xi)=\sigma(2\sigma-1)|k|^{2\sigma-2}|\xi|^2\Big(1+O_{k,\sigma}(|\xi|)\Big)\quad\textup{ as }\xi\to0.$$
$(ii)$ (High frequencies)
$$p_k(\xi)=|\xi|^{2\sigma}\Big(1+O_{k,\sigma}\Big(\frac{1}{|\xi|}\Big)\Big)\quad\textup{ as }\xi\to\infty.$$
\end{lemma}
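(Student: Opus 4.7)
The plan is to derive both asymptotics by Taylor expansion of $|\xi+k|^{2\sigma}$, but in two different regimes. Near the origin, since $k\neq 0$, the map $\xi\mapsto|\xi+k|^{2\sigma}$ is $C^\infty$ and admits an ordinary finite Taylor expansion. For $\xi\to\infty$, we instead factor out $|\xi|^{2\sigma}$ and expand in the small parameter $k/\xi$ via the binomial series.

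For part $(i)$, I would set $f(\xi):=|\xi+k|^{2\sigma}$ on the neighborhood $|\xi|\le|k|/2$, on which $\xi+k$ stays away from $0$ and hence $f$ is smooth. The first three derivative values at the origin were already recorded in \eqref{p_k}: namely $f(0)=|k|^{2\sigma}$, $f'(0)=2\sigma|k|^{2\sigma-2}k$, and $f''(0)=2\sigma(2\sigma-1)|k|^{2\sigma-2}$. Taylor's theorem with remainder then gives
$$f(\xi)=|k|^{2\sigma}+2\sigma|k|^{2\sigma-2}k\,\xi+\sigma(2\sigma-1)|k|^{2\sigma-2}\xi^2+R(\xi),\qquad |R(\xi)|\le C_{k,\sigma}|\xi|^3.$$
Since the constant and linear pieces are exactly what the definition of $p_k$ subtracts off, one gets $p_k(\xi)=\sigma(2\sigma-1)|k|^{2\sigma-2}\xi^2+R(\xi)$. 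Factoring the quadratic leading coefficient (whose sign is strictly positive because $\sigma>\tfrac12$) then produces the stated $(1+O_{k,\sigma}(|\xi|))$ factor.

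For part $(ii)$, for $|\xi|\ge 2|k|$ I would write $|\xi+k|^{2\sigma}=|\xi|^{2\sigma}\bigl|1+k/\xi\bigr|^{2\sigma}$ and apply the binomial expansion of $(1+y)^{2\sigma}$ on $|y|\le\tfrac12$, which yields $\bigl|1+k/\xi\bigr|^{2\sigma}=1+O_\sigma(|k|/|\xi|)$. Substituting into the definition of $p_k$ gives
$$p_k(\xi)=|\xi|^{2\sigma}+|\xi|^{2\sigma}\cdot O_\sigma\!\bigl(|k|/|\xi|\bigr)-|k|^{2\sigma}-2\sigma|k|^{2\sigma-2}k\xi.$$
The binomial remainder, divided by $|\xi|^{2\sigma}$, produces the advertised $O_{k,\sigma}(1/|\xi|)$ correction, while the two remaining polynomial perturbations $-|k|^{2\sigma}$ and $-2\sigma|k|^{2\sigma-2}k\xi$ are absorbed into the $(k,\sigma)$-dependent implicit constant after checking that, relative to $|\xi|^{2\sigma}$, each is dominated in the tail by a constant multiple of $1/|\xi|$ (absorbing the necessary powers of $|\xi|$ into the implicit constant on a fixed large domain).

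The main point that requires (mild) care is the bookkeeping in part $(ii)$: the three subdominant contributions — binomial remainder, the constant, and the linear term — have different homogeneities in $|\xi|$, and one must check that all of them fit inside the advertised $O_{k,\sigma}(1/|\xi|)$ factor once $|\xi|^{2\sigma}$ is factored out. Apart from that, both parts reduce to routine Taylor and binomial expansions, and I do not anticipate any substantive obstacle.
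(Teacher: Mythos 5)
Your treatment of part $(i)$ matches the paper's argument in essence: compute $p_k(0)=p_k'(0)=0$ and $p_k''(0)=2\sigma(2\sigma-1)|k|^{2\sigma-2}$, control $p_k'''$ on the compact set $|\xi|\le|k|/2$ where $p_k$ is smooth, and invoke third-order Taylor with Lagrange remainder before factoring out the positive quadratic coefficient. No issue there.

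For part $(ii)$, however, the bookkeeping you explicitly defer is precisely where the argument breaks, and this is a genuine gap (one shared by the paper's own terse proof and, in fact, by the lemma statement itself). After factoring out $|\xi|^{2\sigma}$, the subtracted linear term $-2\sigma|k|^{2\sigma-2}k\,\xi$ contributes a relative error of size $|\xi|^{1-2\sigma}$. Since $\sigma\in(\tfrac12,1)$ gives $1-2\sigma\in(-1,0)$, this decays strictly more slowly than $1/|\xi|$: for any constant $C_{k,\sigma}$ one has $|\xi|^{1-2\sigma}\big/\bigl(C_{k,\sigma}|\xi|^{-1}\bigr)=|\xi|^{2-2\sigma}/C_{k,\sigma}\to\infty$. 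So the assertion that "each is dominated in the tail by a constant multiple of $1/|\xi|$" fails for this term, and since this term is in fact the dominant correction (it beats both the binomial remainder $O(|\xi|^{-1})$ and the constant $O(|\xi|^{-2\sigma})$), the stated rate cannot be recovered by any rearrangement. The same obstruction explains why the paper's appeal to a "third-order Taylor expansion of $p_k(\xi)/|\xi|^{2\sigma}$ in $k/|\xi|$" is not available: that expression contains $(|k|/|\xi|)^{2\sigma}$ and $(|k|/|\xi|)^{2\sigma-1}$, which are not $C^2$ in $k/|\xi|$ at the origin when $\sigma<1$. What your computation actually delivers (and what is sharp) is $p_k(\xi)=|\xi|^{2\sigma}\bigl(1+O_{k,\sigma}\bigl(|\xi|^{1-2\sigma}\bigr)\bigr)$, which still gives $p_k(\xi)\sim|\xi|^{2\sigma}$ as $|\xi|\to\infty$ but not the advertised $O(1/|\xi|)$ rate; if you need only the equivalence $p_k(\xi)\sim|\xi|^{2\sigma}$, say that, and correct the error order.
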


\begin{proof}
For $(i)$, we compute
\begin{equation}\label{p_k calculation}
\begin{aligned}
p_k(\xi)&=|\xi+k|^{2\sigma}-|k|^{2\sigma}-2\sigma|k|^{2\sigma-2}k\cdot\xi&&\Longrightarrow p_k(0)=0,\\
p_k'(\xi)&=2\sigma |\xi+k|^{2\sigma-2}(\xi+k)-2\sigma|k|^{2\sigma-2}k&&\Longrightarrow p_k'(0)=0,\\
p_k''(\xi)&=2\sigma(2\sigma-1)|\xi+k|^{2\sigma-2}&&\Longrightarrow p_k''(0)=2\sigma(2\sigma-1)|k|^{2\sigma-2},\\
p_k'''(\xi)&=2\sigma(2\sigma-1)(2\sigma-2)|\xi+k|^{2\sigma-3}(\xi+k)&&\Longrightarrow |p_k'''(\xi)|\lesssim_{k,\sigma}1\textup{ for }|\xi|\leq\tfrac{|k|}{2},
\end{aligned}
\end{equation}
where the implicit constant depends on $k$ and $\sigma$. Thus, by the third order Taylor series expansion, we prove $(i)$. For $(ii)$, we write 
$$\frac{p_k(\xi)}{|\xi|^{2\sigma}}=\Big|1+\frac{k}{|\xi|}\Big|^{2\sigma}-\Big(\frac{|k|}{|\xi|}\Big)^{2\sigma}-2\sigma\Big(\frac{|k|}{|\xi|}\Big)^{2\sigma-2} \frac{|k|}{|\xi|}\cdot\frac{\xi}{|\xi|}.$$
Then, using the third order Taylor series expansion for $\frac{p_k(\xi)}{|\xi|^{2\sigma}}$ with $\frac{k}{|\xi|}$ being small, we prove the high frequency asymptotics.
\end{proof}

Using Lemma \ref{asymptotics}, we prove that $\mathcal{P}_k\approx\sigma(2\sigma-1)|k|^{2\sigma-2}(-\Delta)$ in low frequencies, and $\mathcal{P}_k\approx (-\Delta)^\sigma$ in high frequencies in the following sense.

\begin{lemma}[Limiting behavior of $\mathcal{P}_k$] Suppose that $\sigma\in(\frac{1}{2},1)$ and $k\in\mathbb{R}$ with $k\neq 0$.\\
$(i)$ (Low frequencies) For any $\phi\in H^1$, 
$$\int_{\mathbb{R}}\big(\mathcal{P}_k\phi(\tfrac{x}{\lambda_n})\big)\overline{\phi(\tfrac{x}{\lambda_n})}dx-\sigma(2\sigma-1)|k|^{2\sigma-2}\int_{\mathbb{R}}((-\Delta)\phi(\tfrac{x}{\lambda_n}))\overline{\phi(\tfrac{x}{\lambda_n})}dx\to 0\quad\textup{ as }\lambda_n\to\infty.$$
$(ii)$ (High frequencies) For any $\phi\in H^\sigma$, 
$$\int_{\mathbb{R}}\big(\mathcal{P}_k\phi(\tfrac{x}{\lambda_n})\big)\overline{\phi(\tfrac{x}{\lambda_n})}dx-\int_{\mathbb{R}}((-\Delta)^\sigma\phi(\tfrac{x}{\lambda_n}))\overline{\phi(\tfrac{x}{\lambda_n})}dx\to0\quad\textup{ as }\lambda_n\to0.$$
\end{lemma}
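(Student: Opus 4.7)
The plan is to pass to Fourier space. Writing $\phi_{\lambda_n}(x) := \phi(x/\lambda_n)$ so that $\widehat{\phi_{\lambda_n}}(\xi) = \lambda_n \hat\phi(\lambda_n \xi)$, an application of Plancherel followed by the substitution $\eta = \lambda_n \xi$ converts each of the integrals in the statement to a weighted integral against $|\hat\phi(\eta)|^2$. Both parts then reduce to showing
$$I_n := \int_{\mathbb R}\bigl(p_k(\eta/\lambda_n) - m_\star(\eta/\lambda_n)\bigr)\,\lambda_n\,|\hat\phi(\eta)|^2\,d\eta \longrightarrow 0,$$
with $m_\star(\xi) = \sigma(2\sigma-1)|k|^{2\sigma-2}|\xi|^2$ in (i) and $m_\star(\xi) = |\xi|^{2\sigma}$ in (ii).

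For (i), with $\lambda_n \to \infty$, I split $\mathbb R$ at $|\eta| = \lambda_n$. On $\{|\eta|\le\lambda_n\}$ the argument $\eta/\lambda_n$ lies in the Taylor regime of Lemma \ref{asymptotics}(i), which gives $|p_k(\eta/\lambda_n) - m_\star(\eta/\lambda_n)| \lesssim |\eta/\lambda_n|^3$. The resulting integrand is bounded by $|\eta|^2|\hat\phi(\eta)|^2/\lambda_n$, dominated by $|\eta|^2|\hat\phi|^2 \in L^1(\mathbb R)$ since $\phi\in H^1$, and vanishes pointwise; dominated convergence closes this region. On the complementary set, the crude bound $|p_k(\xi)|\lesssim|\xi|^{2\sigma}$ for $|\xi|\ge 1$, combined with $|\xi|^{2\sigma}\le|\xi|^2\lambda_n^{2\sigma-2}$ when $|\xi|=|\eta/\lambda_n|\ge 1$, produces an integrand $\lesssim \lambda_n^{-1}|\eta|^2|\hat\phi|^2$ whose integral is $\le\lambda_n^{-1}\|\nabla\phi\|_{L^2}^2 \to 0$.

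Part (ii) is symmetric, with $\lambda_n \to 0$ pushing $\eta/\lambda_n$ into the high-frequency regime. Splitting again at $|\eta| = \lambda_n$: on $\{|\eta|>\lambda_n\}$, Lemma \ref{asymptotics}(ii) yields $|p_k(\eta/\lambda_n) - |\eta/\lambda_n|^{2\sigma}| \lesssim |\eta/\lambda_n|^{2\sigma-1}$, so this piece is controlled by $\lambda_n^{2-2\sigma}\int(1 + |\eta|^{2\sigma})|\hat\phi|^2\,d\eta \lesssim \lambda_n^{2-2\sigma}\|\phi\|_{H^\sigma}^2 \to 0$; integrability of $|\eta|^{2\sigma-1}|\hat\phi|^2$ is precisely what makes the $H^\sigma$ hypothesis sufficient (and is the reason the restriction $\sigma>\tfrac12$ is comfortable here). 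On the vanishing set $\{|\eta|\le\lambda_n\}$, both $|p_k(\eta/\lambda_n)|$ and $|\eta/\lambda_n|^{2\sigma}$ are bounded by $|\eta/\lambda_n|^{2\sigma}$, giving a contribution of size $\lambda_n\|\phi\|_{L^2}^2$, which also vanishes. The entire argument is a routine Plancherel plus dominated-convergence computation; no single step is a real obstacle, and the only care needed is choosing the frequency-splitting threshold $|\eta|=\lambda_n$ and matching the correct Taylor estimate (low-frequency in (i), high-frequency in (ii)) on each side.
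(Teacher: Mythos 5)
Your argument is the paper's argument: both pass to Fourier space via Plancherel, substitute $\eta = \lambda_n \xi$ so that each claim becomes $\lambda_n\int_{\mathbb R}\bigl(p_k(\eta/\lambda_n)-m_\star(\eta/\lambda_n)\bigr)|\hat\phi(\eta)|^2\,d\eta \to 0$, and then feed in the asymptotics of Lemma~\ref{asymptotics}; the paper closes with dominated convergence while you perform an explicit frequency split, but these are interchangeable executions of the same estimate. Two cosmetic slips are worth fixing. In part~(i), the intermediate inequality $|\xi|^{2\sigma}\le|\xi|^2\lambda_n^{2\sigma-2}$ is false for $|\xi|\ge 1$ and $\lambda_n$ large (the factor $\lambda_n^{2\sigma-2}<1$ makes the right side smaller); all you need is the simpler $|\xi|^{2\sigma}\le|\xi|^2$ for $|\xi|\ge 1$, which already gives $\lambda_n p_k(\eta/\lambda_n)\lesssim\lambda_n|\eta/\lambda_n|^2=|\eta|^2/\lambda_n$. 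Also, the Taylor regime of Lemma~\ref{asymptotics}(i) (and of Lemma~\ref{p_k bound}(i)) is $|\xi|\le|k|/2$, not $|\xi|\le 1$, so the splitting threshold should be $|\eta|=c_k\lambda_n$ rather than $|\eta|=\lambda_n$; a symmetric remark applies in part~(ii), where the high-frequency asymptotics of Lemma~\ref{asymptotics}(ii) strictly hold only for $|\xi|$ large and the bound $|p_k(\xi)-|\xi|^{2\sigma}|\lesssim|\xi|^{2\sigma-1}$ for intermediate $|\xi|$ should be obtained from continuity and boundedness on compact sets. Neither of these affects the conclusion.
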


\begin{proof}
By Plancherel theorem, change of the variables, Lemma \ref{asymptotics} $(i)$ and the dominate convergence theorem, 
\begin{align*}
\int_{\mathbb{R}}\big(\mathcal{P}_k\phi(\tfrac{x}{\lambda_n})\big)\overline{\phi(\tfrac{x}{\lambda_n})}dx&=\int_{\mathbb{R}}(|\xi+k|^{2\sigma}-|k|^{2\sigma}-2\sigma|k|^{2\sigma-2}k\cdot\xi)|\lambda_n\hat{\phi}(\lambda_n\xi)|^2 d\xi\\
&=\lambda_n\int_{\mathbb{R}}(|\tfrac{\xi}{\lambda_n}+k|^{2\sigma}-|k|^{2\sigma}-2\sigma|k|^{2\sigma-2}k\cdot\tfrac{\xi}{\lambda_n})|\hat{\phi}(\xi)|^2 d\xi\\
&=\lambda_n\int_{\mathbb{R}}\sigma(2\sigma-1)|k|^{2\sigma-2}|\tfrac{\xi}{\lambda_n}|^2|\hat{\phi}(\xi)|^2 d\xi+o_n(1)\\
&=\int_{\mathbb{R}}\sigma(2\sigma-1)|k|^{2\sigma-2}|\xi|^2|\lambda_n\hat{\phi}(\lambda_n\xi)|^2 d\xi+o_n(1)\\
&=\sigma(2\sigma-1)|k|^{2\sigma-2}\int_{\mathbb{R}}((-\Delta)\phi(\tfrac{x}{\lambda_n}))\overline{\phi(\tfrac{x}{\lambda_n})}dx+o_n(1).
\end{align*}
Similarly, we prove $(ii)$.
\end{proof}

In all frequencies, $\mathcal{P}_k$ is bounded by $(-\Delta)^\sigma$ up to constant multiple.
\begin{lemma}[Bounds for $p_k$]\label{p_k bound}
Let $\sigma\in(\frac{1}{2},1)$ and $k\in\mathbb{R}$ with $k\neq 0$. Then, $p_k$ satisfies the following bounds whose implicit constants do not depend on $k$.\\
$(i)$ (Low frequencies) $p_k(\xi)\sim 2\sigma(2\sigma-1)|k|^{2\sigma-2}|\xi|^2$ for $|\xi|\leq\frac{|k|}{2}$.\\
$(ii)$ (High frequencies) $p_k(\xi)\sim|\xi|^{2\sigma}$ for $|\xi|\geq\frac{|k|}{2}$.\\
$(iii)$ (Upper bound) $p_k(\xi)\lesssim|\xi|^{2\sigma}$ for all $\xi$.
\end{lemma}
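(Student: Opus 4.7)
The unifying observation is the scaling identity
\[
p_k(\xi) \;=\; |k|^{2\sigma}\, p_{\mathrm{sgn}(k)}\!\left(\tfrac{\xi}{|k|}\right),
\]
which follows directly from the definition of $p_k$, together with the symmetry $p_{-1}(\eta)=p_1(-\eta)$. This reduces every assertion to a statement about $p_1(\eta)$ on the appropriate range of $\eta=\xi/|k|$, and the $k$-independence of the implicit constants becomes automatic after undoing the scaling. So everything boils down to understanding $p_1$ on $\{|\eta|\le 1/2\}$ for part (i) and on $\{|\eta|\ge 1/2\}$ for part (ii).

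For (i), with $|\eta|\le 1/2$, I use the computations already collected in Lemma \ref{asymptotics}: $p_1(0)=p_1'(0)=0$, $p_1''(0)=2\sigma(2\sigma-1)>0$, and $|p_1'''|$ is uniformly bounded on $[-1/2,1/2]$. A third-order Taylor expansion then gives $p_1(\eta)=\sigma(2\sigma-1)\eta^2+O(|\eta|^3)$, so $p_1(\eta)\sim \eta^2$ on $|\eta|\le 1/2$ with constants independent of anything. Scaling back yields $p_k(\xi)\sim |k|^{2\sigma-2}|\xi|^2$ on $|\xi|\le |k|/2$. For (ii), on $|\eta|\ge 1/2$, the upper bound follows from $|\eta+1|^{2\sigma}\le (1+|\eta|)^{2\sigma}\le (3|\eta|)^{2\sigma}$ together with $|1+2\sigma\eta|\lesssim |\eta|^{2\sigma}$, valid because $2\sigma>1$ and $|\eta|\ge 1/2$. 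For the matching lower bound I split into two regimes: on $\{|\eta|\ge M\}$ with $M$ large, Lemma \ref{asymptotics}(ii) directly gives $p_1(\eta)\ge \tfrac12|\eta|^{2\sigma}$; on the compact set $\{1/2\le|\eta|\le M\}$, the continuous function $p_1$ is strictly positive (since $p_1\ge 0$ with equality only at $\eta=0$, from the convexity shown in \eqref{p_k}), hence bounded below by some $c>0$, which yields $p_1(\eta)\ge c\, M^{-2\sigma}|\eta|^{2\sigma}$. Patching gives $p_1(\eta)\sim |\eta|^{2\sigma}$ with absolute constants, and scaling produces (ii).

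Part (iii) is then just a gluing step. For $|\xi|\ge |k|/2$, (ii) already supplies $p_k(\xi)\lesssim |\xi|^{2\sigma}$. For $|\xi|\le |k|/2$, (i) yields
\[
p_k(\xi)\;\lesssim\; |k|^{2\sigma-2}|\xi|^2 \;=\; |\xi|^{2\sigma}\cdot\bigl(|\xi|/|k|\bigr)^{2-2\sigma},
\]
and since $2-2\sigma>0$ and $|\xi|/|k|\le 1/2$, the factor in parentheses is at most $2^{2\sigma-2}$, giving $p_k(\xi)\lesssim |\xi|^{2\sigma}$ uniformly in $k$. The only delicate point is the uniform-in-$k$ lower bound in (ii), which is what the scaling-plus-compactness argument is designed to handle; no step is genuinely hard once the rescaling $\xi\mapsto\xi/|k|$ is in place.
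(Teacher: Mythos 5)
Your proof is correct, and it is essentially the same argument as the paper's, but more cleanly organized. The key structural difference is your up-front scaling reduction $p_k(\xi) = |k|^{2\sigma}p_{\mathrm{sgn}(k)}(\xi/|k|)$ together with $p_{-1}(\eta)=p_1(-\eta)$, which the paper never makes explicit; it works directly with $p_k$ and tracks the powers of $|k|$ by hand. Your reduction makes the $k$-independence of all implicit constants immediate, which is exactly the subtle point the lemma advertises. After rescaling, parts (i) and the large-$|\eta|$ piece of (ii) are the same Taylor arguments the paper uses (drawn from Lemma \ref{asymptotics}). For the intermediate regime of (ii), the paper evaluates $p_k$ explicitly at the endpoints $\xi=\pm|k|/2$ (using convexity and monotonicity away from $0$ to locate the minimum), which gives a concrete constant $\min\{(3/2)^{2\sigma}-1-\sigma,\ (1/2)^{2\sigma}-1+\sigma\}$; you instead invoke positivity plus compactness on $\{1/2\le|\eta|\le M\}$, which is less explicit but equally valid and arguably more robust. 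Finally, the paper dismisses (iii) as immediate from (i) and (ii), whereas you correctly spell out the step $|k|^{2\sigma-2}|\xi|^2 = |\xi|^{2\sigma}\,(|\xi|/|k|)^{2-2\sigma} \lesssim |\xi|^{2\sigma}$ using $2-2\sigma>0$ and $|\xi|/|k|\le 1/2$ — this is the right justification and worth including.
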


\begin{proof}
We observe from \eqref{p_k calculation} that $p_k(\xi)$ is a strictly convex function having only one critical number $0$ with the absolute minimum $p_k(0)=0$.\\
$(i)$ Suppose that $|\xi|\leq\frac{|k|}{2}$. Then, by \eqref{p_k calculation}, $|p_k''(\xi)|\sim 2\sigma(2\sigma-1)|k|^{2\sigma-2}$. Thus, by the second order Taylor expansion, we obtain
\begin{align*}
p_k(\xi)&\sim p_k(0)+p_k'(0)\xi+p_k''(\xi_*)|\xi|^2\quad\textup{(for some $\xi_*\in(0,\xi)$)}\\
&\sim 2\sigma(2\sigma-1)|k|^{2\sigma-2}|\xi|^2.
\end{align*}
$(ii)$ If $|\xi|\geq2|k|$, by the second order Taylor series expansion as in $(i)$, we can prove that
$$p_k(\xi)=|\xi|^{2\sigma}\Big\{|1+\tfrac{k}{|\xi|}|^{2\sigma}-(\tfrac{|k|}{|\xi|})^{2\sigma}-2\sigma\tfrac{\xi}{|k|}\Big\}\sim|\xi|^{2\sigma}.$$
Suppose that $\frac{|k|}{2}\leq|\xi|\leq 2|k|$. Then,
$$\min_{\frac{|k|}{2}\leq|\xi|\leq 2|k|}p_k(\xi)=\min_{\xi=\pm\frac{|k|}{2}}p_k(\xi)=\min\Big\{(\tfrac{3}{2})^{2\sigma}-1-\sigma, (\tfrac{1}{2})^{2\sigma}-1+\sigma\Big\}|k|^{2\sigma}\geq|\xi|^{2\sigma}.$$
Thus, $p_k(\xi)\gtrsim |\xi|^{2\sigma}$. Similarly, we can show that $\max_{\frac{|k|}{2}\leq|\xi|\leq 2|k|}p_k(\xi)\sim|k|^{2\sigma}$, which implies that $p_k(\xi)\lesssim|\xi|^{2\sigma}.$\\
$(iii)$ follows immediately from $(i)$ and $(ii)$.
\end{proof}

\section{Pohozaev Identities}

\begin{lemma}[Pohozaev identities]\label{Pohozaev} Let $Q_{(\sigma)}$ is a solution to the nonlinear elliptic equation $(-\Delta)^\sigma Q_{(\sigma)} +Q_{(\sigma)}-Q_{(\sigma)}^3=0$. Then,
$$\|Q_{(\sigma)}\|_{\dot{H}^\sigma}^2=\frac{1}{4\sigma-1}\|Q_{(\sigma)}\|_{L^2}^2,\ \|Q_{(\sigma)}\|_{L^{4}}^{4}=\frac{4\sigma}{4\sigma-1}\|Q_{(\sigma)}\|_{L^2}^2.$$
\end{lemma}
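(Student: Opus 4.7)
My plan is to derive the two Pohozaev identities by pairing the equation against two distinguished test functions and then solving the resulting $2\times2$ linear system. Specifically, I will multiply $(-\Delta)^\sigma Q_{(\sigma)}+Q_{(\sigma)}-Q_{(\sigma)}^3=0$ by $Q_{(\sigma)}$ and integrate to get the \emph{energy identity}
\[
\|Q_{(\sigma)}\|_{\dot H^\sigma}^2+\|Q_{(\sigma)}\|_{L^2}^2-\|Q_{(\sigma)}\|_{L^4}^4=0,
\]
and multiply the equation by the dilation field $xQ_{(\sigma)}'$ and integrate to get a second \emph{scaling identity}. The core computation in the second step is
\[
\int_{\mathbb R}\bigl((-\Delta)^\sigma Q_{(\sigma)}\bigr)(xQ_{(\sigma)}')\,dx=\tfrac{2\sigma-1}{2}\|Q_{(\sigma)}\|_{\dot H^\sigma}^2,
\]
which follows, via the symmetry of $(-\Delta)^\sigma$, from the commutator identity
\[
(-\Delta)^\sigma(x\partial_x u)=2\sigma(-\Delta)^\sigma u+x\partial_x(-\Delta)^\sigma u,
\]
obtained by differentiating the scaling law $(-\Delta)^\sigma[u(\lambda\cdot)]=\lambda^{2\sigma}[(-\Delta)^\sigma u](\lambda\cdot)$ at $\lambda=1$. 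The remaining two pairings reduce to pure boundary/divergence identities: $\int Q_{(\sigma)}(xQ_{(\sigma)}')\,dx=-\tfrac12\|Q_{(\sigma)}\|_{L^2}^2$ and $\int Q_{(\sigma)}^3(xQ_{(\sigma)}')\,dx=-\tfrac14\|Q_{(\sigma)}\|_{L^4}^4$.

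Combining these yields the scaling identity
\[
\tfrac{2\sigma-1}{2}\|Q_{(\sigma)}\|_{\dot H^\sigma}^2-\tfrac12\|Q_{(\sigma)}\|_{L^2}^2+\tfrac14\|Q_{(\sigma)}\|_{L^4}^4=0.
\]
Eliminating $\|Q_{(\sigma)}\|_{L^4}^4$ between this and the energy identity gives $(4\sigma-1)\|Q_{(\sigma)}\|_{\dot H^\sigma}^2=\|Q_{(\sigma)}\|_{L^2}^2$, i.e.\ the first claimed identity, and substituting back into the energy identity gives $\|Q_{(\sigma)}\|_{L^4}^4=\frac{4\sigma}{4\sigma-1}\|Q_{(\sigma)}\|_{L^2}^2$. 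Note that $\sigma\in(\tfrac14,1]$ (the range in which $Q_{(\sigma)}$ exists, per Theorem \ref{existence of a static ground state}) ensures $4\sigma-1>0$, so the system is non-degenerate and both norms are positive, consistent with $Q_{(\sigma)}\not\equiv0$.

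The one step that deserves care is the justification of the integrations by parts against the unbounded multiplier $x$, since \emph{a priori} $Q_{(\sigma)}$ only lies in $H^\sigma$. Here I would invoke the regularity and decay theory of ground states for the fractional Laplacian (see Frank--Lenzmann \cite{FL}): $Q_{(\sigma)}\in H^{2\sigma+1}(\mathbb R)\cap C^\infty(\mathbb R)$ with the polynomial decay $|Q_{(\sigma)}(x)|+|Q_{(\sigma)}'(x)|\lesssim\langle x\rangle^{-(1+2\sigma)}$, which is more than enough to make $xQ_{(\sigma)}'$ a legitimate test function and to justify each integration by parts (with vanishing boundary contributions). In particular, $x\cdot\nabla(Q_{(\sigma)}^2)$ and $x\cdot\nabla(Q_{(\sigma)}^4)$ are integrable divergences, and the fractional integration by parts $\int((-\Delta)^\sigma u)v\,dx=\int u\,(-\Delta)^\sigma v\,dx$ is valid on this class. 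Once this technical step is secured, the algebraic manipulations above are routine and the lemma follows.
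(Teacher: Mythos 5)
Your proof is correct and follows essentially the same route as the paper: pair the equation with $Q_{(\sigma)}$ and with the dilation field $x\cdot\nabla Q_{(\sigma)}$, use a commutator identity for $(-\Delta)^\sigma$ against the scaling generator (your version, obtained by differentiating $(-\Delta)^\sigma[u(\lambda\cdot)]=\lambda^{2\sigma}[(-\Delta)^\sigma u](\lambda\cdot)$, is an equivalent and in fact cleaner packaging of the identity the paper quotes, which contains a typo), and solve the resulting $2\times2$ linear system. The only substantive addition is your justification of the integrations by parts via the regularity and decay of $Q_{(\sigma)}$, a point the paper passes over in silence but which is genuinely needed to make the argument rigorous.
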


\begin{proof}
Multiplying the equation by $Q_{(\sigma)}$ (and $x\cdot\nabla Q_{(\sigma)}$), integrating over $\mathbb{R}$ and then applying integration by parts, we obtain
$$\|Q_{(\sigma)}\|_{\dot{H}^\sigma}^2+\|Q_{(\sigma)}\|_{L^2}^2-\|Q_{(\sigma)}\|_{L^{4}}^{4}=0,\ -(\tfrac{1}{2}-\sigma)\|Q_{(\sigma)}\|_{\dot{H}^\sigma}^2-\tfrac{1}{2}\|Q_{(\sigma)}\|_{L^2}^2+\tfrac{1}{4}\|Q_{(\sigma)}\|_{L^{4}}^{4}=0.$$
Here, for the second identity, we use $(-\Delta)^\sigma (x f)=x (-\Delta)^\alpha f-2\sigma(-\Delta)^{2\sigma-2}\nabla f$. Solving the equation for $\|Q_{(\sigma)}\|_{\dot{H}^\sigma}^2$ and $\|Q_{(\sigma)}\|_{L^{4}}^{4}$, we prove the lemma.
\end{proof}

\section*{Acknowledgements}
Y.H. would like to thank IH\'ES for their hospitality and support while he visited in the summer of 2014. Y.S. would like to thank the hospitality of the Department of Mathematics at University of Texas at Austin where part of the work was initiated. Y.S. acknowledges the support of ANR grants "HAB" and "NONLOCAL".  

\bibliographystyle{alpha} 
\bibliography{biblio}
\end{document}